%
\documentclass[runningheads]{llncs}
\usepackage[T1]{fontenc}
%
\usepackage{graphicx}
%
\usepackage{hyperref}
\usepackage{color}

\urlstyle{rm}
%

\usepackage{caption}
\captionsetup[table]{skip=5pt}
\captionsetup[figure]{skip=-5pt}

\usepackage{pgf,tikz}
\usetikzlibrary{math,decorations.pathreplacing,calligraphy}
\pgfdeclarelayer{background}
\pgfdeclarelayer{foreground}
\pgfdeclarelayer{front}
\pgfsetlayers{background,main,foreground,front}

\usepackage{pgfplots}

\usepackage[dvipsnames]{xcolor}

\usepackage{amsmath,amssymb}
\usepackage{dsfont}

\usepackage{enumitem}
\def\RMlabel{\upshape({\itshape \roman*\,})}

\newcommand{\cF}{\mathcal{F}}
\newcommand{\cM}{\mathcal{M}}
\newcommand{\cP}{\mathcal{P}}
\newcommand{\cQ}{\mathcal{Q}}
\newcommand{\cR}{\mathcal{R}}
\def\PP{\Pr}

\let\E=\EE
\let\epsilon=\varepsilon
\let\eps=\epsilon
\DeclareMathOperator{\tr}{tr}

\renewcommand{\le}{\leqslant}

\renewcommand{\ge}{\geqslant}
\renewcommand{\geq}{\geqslant}

\newcommand{\RM}{\mathbb{RM}}

\newtheorem{observation}[theorem]{Observation}

\setlength{\textfloatsep}{3pt}

\begin{document}
\title{Homogeneous substructures in random ordered uniform matchings\thanks{The first author was supported in part by Simons Foundation Grant MPS-TSM-00007551. The second author was supported in part by Narodowe Centrum Nauki, grant 2020/37/B/ST1/03298. The third author was supported in part by the AGH University of Krakow grant no.\ 16.16.420.054, funded by the Polish Ministry of Science and Higher Education. The fourth author was supported in part by Narodowe Centrum Nauki, grant 2024/53/B/ST1/00164.}}
\titlerunning{Homogeneous substructures in random ordered uniform matchings}

%
\author{Andrzej Dudek\inst{1} \and
Jaros\l aw Grytczuk\inst{2} \and
Jakub Przyby{\l}o\inst{3} \and
Andrzej Ruci\'nski\inst{4}}
\authorrunning{A.~Dudek et al.}
%
\institute{Department of Mathematics, Western Michigan University, Kalamazoo, MI, USA\\
\email{andrzej.dudek@wmich.edu}
\and
Faculty of Mathematics and Information Science, Warsaw University of Technology, Warsaw, Poland\\
\email{j.grytczuk@mini.pw.edu.pl}
\and
AGH University of Krakow, Krakow, Poland\\
\email{jakubprz@agh.edu.pl}
\and
Department of Discrete Mathematics, Adam Mickiewicz University, Pozna\'n, Poland\\
\email{rucinski@amu.edu.pl}
}
\maketitle              
\begin{abstract}
An ordered $r$-uniform matching of size $n$ is a collection of $n$ pairwise disjoint $r$-subsets of a linearly ordered set of $rn$ vertices. For $n=2$, such a matching is called an \emph{$r$-pattern}, as it represents one of $\tfrac12\binom{2r}r$ ways two disjoint edges may intertwine. Given a  set $\cP$ of $r$-patterns, a \emph{$\cP$-clique} is a matching with all pairs of edges belonging to $\cP$.
In this paper we determine the order of magnitude of the size of a largest $\cP$-clique in a \emph{random} ordered $r$-uniform matching 
 for several sets $\cP$, including all sets of size $|\cP|\le2$ and  the set $\cR^{(r)}$ of all $2^{r-1}$ $r$-partite $r$-patterns.
\keywords{Random matchings  \and Ordered matchings \and Words and patterns.}
\end{abstract}

\section{Introduction}
We study an Erd\H os-Szekeres-type problem for random ordered $r$-uniform matchings, that is, families consisting of $n$ disjoint $r$-element subsets of a given linearly ordered set of size $rn$.
Before stating our results we present necessary background.

\subsection{Matchings, words, and patterns}
A hypergraph on a linearly ordered vertex set is called \emph{ordered},  and it is \emph{$r$-uniform} if all its edges are of the same size $r$. Two ordered hypergraphs are \emph{order-isomorphic} if there is a usual isomorphism between them preserving their linear orders.
An \emph{ordered $r$-matching} of \emph{size} $n$ is an ordered $r$-uniform hypergraph consisting of $n$ pairwise disjoint edges (and no isolated vertices). For fixed $r$ and $n$, there are precisely
$\frac{(rn)!}{(r!)^n\, n!}$ distinct ordered $r$-matchings of size $n$. The family of all of them will be denoted by $\cM^{(r)}_n$. For instance, the family $\cM^{(2)}_2$ consists of three distinct matchings depicted in Figure \ref{ESZ1}.

\begin{figure}[ht]
	\captionsetup[subfigure]{labelformat=empty}
	\begin{center}
		
		\scalebox{0.80}
		{
			\centering
			\begin{tikzpicture}
				[line width = .5pt,
				vtx/.style={circle,draw,black,very thick,fill=black, line width = 1pt, inner sep=0pt},
				]
				
				\coordinate (0) at (0.5,0) {};
				\node[vtx] (1) at (1,0) {};
				\node[vtx] (2) at (2,0) {};
				\node[vtx] (3) at (3,0) {};
				\node[vtx] (4) at (4,0) {};
				\coordinate (5) at (4.5,0) {};
				\draw[line width=0.3mm, color=lightgray]  (0) -- (5);
				\fill[fill=black, outer sep=1mm]  (1) circle (0.1) node [below] {$1$};
				\fill[fill=black, outer sep=1mm]  (2) circle (0.1) node [below] {$2$};
				\fill[fill=black, outer sep=1mm]  (3) circle (0.1) node [below] {$3$};
				\fill[fill=black, outer sep=1mm]  (4) circle (0.1) node [below] {$4$};
				\draw[line width=0.5mm, color=black, outer sep=2mm] (2) arc (0:180:0.5);
				\draw[line width=0.5mm, color=black, outer sep=2mm] (4) arc (0:180:0.5);
				
				\coordinate (0) at (5.5,0) {};
				\node[vtx] (1) at (6,0) {};
				\node[vtx] (2) at (7,0) {};
				\node[vtx] (3) at (8,0) {};
				\node[vtx] (4) at (9,0) {};
				\coordinate (5) at (9.5,0) {};
				\draw[line width=0.3mm, color=lightgray]  (0) -- (5);
				\fill[fill=black, outer sep=1mm]  (1) circle (0.1) node [below] {$1$};
				\fill[fill=black, outer sep=1mm]  (2) circle (0.1) node [below] {$2$};
				\fill[fill=black, outer sep=1mm]  (3) circle (0.1) node [below] {$3$};
				\fill[fill=black, outer sep=1mm]  (4) circle (0.1) node [below] {$4$};
				\draw[line width=0.5mm, color=black, outer sep=2mm] (3) arc (0:180:0.5);
				\draw[line width=0.5mm, color=black, outer sep=2mm] (4) arc (0:180:1.5);
				
				\coordinate (0) at (10.5,0) {};
				\node[vtx] (1) at (11,0) {};
				\node[vtx] (2) at (12,0) {};
				\node[vtx] (3) at (13,0) {};
				\node[vtx] (4) at (14,0) {};
				\coordinate (5) at (14.5,0) {};
				\draw[line width=0.3mm, color=lightgray]  (0) -- (5);
				\fill[fill=black, outer sep=1mm]  (1) circle (0.1) node [below] {$1$};
				\fill[fill=black, outer sep=1mm]  (2) circle (0.1) node [below] {$2$};
				\fill[fill=black, outer sep=1mm]  (3) circle (0.1) node [below] {$3$};
				\fill[fill=black, outer sep=1mm]  (4) circle (0.1) node [below] {$4$};
				\draw[line width=0.5mm, color=black, outer sep=2mm] (3) arc (0:180:1);
				\draw[line width=0.5mm, color=black, outer sep=2mm] (4) arc (0:180:1);
				
			\end{tikzpicture}
		}
		
	\end{center}
	\caption{Three distinct $2$-matchings of size two.}
	\label{ESZ1}
	
\end{figure}
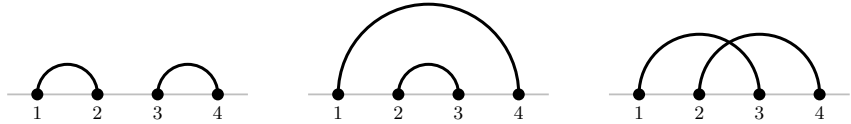

A convenient way to represent ordered $r$-matchings is in terms of \emph{words}. One simply fixes an alphabet of size $n$ and assigns different letters to different edges. Then, every vertex is replaced by the letter assigned to the edge it is contained in, so the obtained word contains each letter exactly $r$ times. In other words,  For instance, if $r=3$ and $M=\{\{1,2,4\}, \{3,10,12\},\{5,7,11\}$, $\{6,8,9\}\}$,
then, two examples of   words representing $M$ are  $AABACDCDDBCB$ and $CCECDFDFFEDE$. 
Just for convenience, to represent a given matching, we typically choose a word in which the first occurrences of letters appear alphabetically.
We will often use the power notation  $W^k$ to write concisely the word
$
W\;W\;\cdots W
$
consisting of $k$ repetitions of the same word $W$. For instance, $ABABAB=(AB)^3$ and $ABBBABAAAA=AB^3ABA^4$.

Ordered $r$-matchings of size $n=2$ are called \emph{$r$-patterns}. For each $r\geqslant 2$, there are exactly $\tfrac12\binom{2r}r$ of them. The set of all $r$-patterns is denoted by $\cP^{(r)}$ (instead of $\cM_2^{(r)}$). For instance, in letter notation, $\cP^{(2)}=\{AABB,\;ABBA,\;ABAB\}$ (cf.  Figure \ref{ESZ1}).
For a fixed pattern $P$, a \emph{$P$-clique} is an ordered matching in which every pair of edges forms pattern $P$. In Figure \ref{ESZ2} one can see three $P$-cliques of size four corresponding to the three $2$-patterns in $\cP^{(2)}$.

\begin{figure}[ht]
	\captionsetup[subfigure]{labelformat=empty}
	\begin{center}
		
		\scalebox{0.80}
		{
			\centering
			\begin{tikzpicture}
				[line width = .5pt,
				vtx/.style={circle,draw,black,very thick,fill=black, line width = 1pt, inner sep=0pt},
				]
				
				\coordinate (0) at (0.25,0) {};
				\node[vtx] (1) at (0.5,0) {};
				\node[vtx] (2) at (1,0) {};
				\node[vtx] (3) at (1.5,0) {};
				\node[vtx] (4) at (2,0) {};
				\node[vtx] (5) at (2.5,0) {};
				\node[vtx] (6) at (3,0) {};
				\node[vtx] (7) at (3.5,0) {};
				\node[vtx] (8) at (4,0) {};
				\coordinate (9) at (4.25,0) {};
				\draw[line width=0.3mm, color=lightgray]  (0) -- (9);
				\fill[fill=black, outer sep=1mm]  (1) circle (0.1) node [below] {$A$};
				\fill[fill=black, outer sep=1mm]  (2) circle (0.1) node [below] {$A$};
				\fill[fill=black, outer sep=1mm]  (3) circle (0.1) node [below] {$B$};
				\fill[fill=black, outer sep=1mm]  (4) circle (0.1) node [below] {$B$};
				\fill[fill=black, outer sep=1mm]  (5) circle (0.1) node [below] {$C$};
				\fill[fill=black, outer sep=1mm]  (6) circle (0.1) node [below] {$C$};
				\fill[fill=black, outer sep=1mm]  (7) circle (0.1) node [below] {$D$};
				\fill[fill=black, outer sep=1mm]  (8) circle (0.1) node [below] {$D$};
				\draw[line width=0.5mm, color=black, outer sep=2mm] (2) arc (0:180:0.25);
				\draw[line width=0.5mm, color=black, outer sep=2mm] (4) arc (0:180:0.25);
				\draw[line width=0.5mm, color=black, outer sep=2mm] (6) arc (0:180:0.25);
				\draw[line width=0.5mm, color=black, outer sep=2mm] (8) arc (0:180:0.25);
				
				\coordinate (0) at (5.25,0) {};
				\node[vtx] (1) at (5.5,0) {};
				\node[vtx] (2) at (6,0) {};
				\node[vtx] (3) at (6.5,0) {};
				\node[vtx] (4) at (7,0) {};
				\node[vtx] (5) at (7.5,0) {};
				\node[vtx] (6) at (8,0) {};
				\node[vtx] (7) at (8.5,0) {};
				\node[vtx] (8) at (9,0) {};
				\coordinate (9) at (9.25,0) {};
				\draw[line width=0.3mm, color=lightgray]  (0) -- (9);
				\fill[fill=black, outer sep=1mm]  (1) circle (0.1) node [below] {$A$};
				\fill[fill=black, outer sep=1mm]  (2) circle (0.1) node [below] {$B$};
				\fill[fill=black, outer sep=1mm]  (3) circle (0.1) node [below] {$C$};
				\fill[fill=black, outer sep=1mm]  (4) circle (0.1) node [below] {$D$};
				\fill[fill=black, outer sep=1mm]  (5) circle (0.1) node [below] {$D$};
				\fill[fill=black, outer sep=1mm]  (6) circle (0.1) node [below] {$C$};
				\fill[fill=black, outer sep=1mm]  (7) circle (0.1) node [below] {$B$};
				\fill[fill=black, outer sep=1mm]  (8) circle (0.1) node [below] {$A$};
				\draw[line width=0.5mm, color=black, outer sep=2mm] (5) arc (0:180:0.25);
				\draw[line width=0.5mm, color=black, outer sep=2mm] (6) arc (0:180:0.75);
				\draw[line width=0.5mm, color=black, outer sep=2mm] (7) arc (0:180:1.25);
				\draw[line width=0.5mm, color=black, outer sep=2mm] (8) arc (0:180:1.75);
				
				\coordinate (0) at (10.25,0) {};
				\node[vtx] (1) at (10.5,0) {};
				\node[vtx] (2) at (11,0) {};
				\node[vtx] (3) at (11.5,0) {};
				\node[vtx] (4) at (12,0) {};
				\node[vtx] (5) at (12.5,0) {};
				\node[vtx] (6) at (13,0) {};
				\node[vtx] (7) at (13.5,0) {};
				\node[vtx] (8) at (14,0) {};
				\coordinate (9) at (14.25,0) {};
				\draw[line width=0.3mm, color=lightgray]  (0) -- (9);
				\fill[fill=black, outer sep=1mm]  (1) circle (0.1) node [below] {$A$};
				\fill[fill=black, outer sep=1mm]  (2) circle (0.1) node [below] {$B$};
				\fill[fill=black, outer sep=1mm]  (3) circle (0.1) node [below] {$C$};
				\fill[fill=black, outer sep=1mm]  (4) circle (0.1) node [below] {$D$};
				\fill[fill=black, outer sep=1mm]  (5) circle (0.1) node [below] {$A$};
				\fill[fill=black, outer sep=1mm]  (6) circle (0.1) node [below] {$B$};
				\fill[fill=black, outer sep=1mm]  (7) circle (0.1) node [below] {$C$};
				\fill[fill=black, outer sep=1mm]  (8) circle (0.1) node [below] {$D$};
				\draw[line width=0.5mm, color=black, outer sep=2mm] (5) arc (0:180:1);
				\draw[line width=0.5mm, color=black, outer sep=2mm] (6) arc (0:180:1);
				\draw[line width=0.5mm, color=black, outer sep=2mm] (7) arc (0:180:1);
				\draw[line width=0.5mm, color=black, outer sep=2mm] (8) arc (0:180:1);
				
			\end{tikzpicture}
		}
		
	\end{center}
	
	\caption{Cliques of size $4$ corresponding to the three patterns in $\cP^{(2)}$.}
	\label{ESZ2}
	
\end{figure}
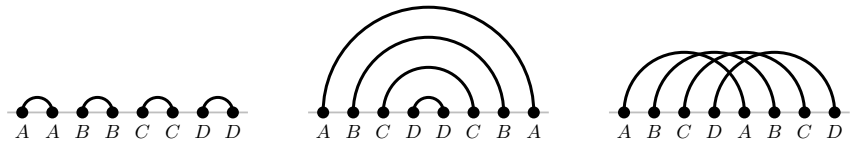

   Inspired by the celebrated Erd\H os-Szekeres Theorem for permutations, it was proved in \cite{DGR-match} (cf. \cite{DGR_LATIN}) and \cite{HJW2019} that every ordered $2$-matching of size $n$ contains, for \emph{some} $P\in \cP^{(2)}$, a $P$-clique of size $\lceil n^{1/3}\rceil$ (and this is optimal). In \cite{AJKS} and \cite{SZ}, this was generalized to arbitrary $r$: every ordered $r$-matching of size $n$ contains, for \emph{some} $P\in \cP^{(r)}$, a $P$-clique of size, roughly, $ n^{1/(2^r-1)}$.

 Our goal is to study more complex substructures, and in a random setting. Let $\cP \subseteq \cP^{(r)}$ be a fixed subset of $r$-patterns. An ordered $r$-matching $M$ is called a \emph{$\cP$-clique} if every pair of edges in $M$ forms a pattern belonging to~$\cP$. Let $z_{\cP}(M)$ denote the largest size of a $\cP$-clique in $M$ and let $\RM^{(r)}_{n}$ denote a random ordered $r$-matching of size $n$, that is, an $r$-matching picked uniformly at random out of the set of all ordered $r$-matchings on the set $[rn]$. Using the introduced notation, our main problem can be stated as follows.

 {\bf Problem.} 
 	\emph{For a fixed $r\geqslant 2$ and for a given subset of patterns $\cP\subseteq \cP^{(r)}$, determine the order of magnitude of the random variable $z_{\cP}(\RM^{(r)}_{n})$.}

\subsection{Graph matchings ($r=2$)}\label{graphs}

 Earlier results about $z_{\cP}(\RM^{(r)}_{n})$ dealt exclusively with the case $r=2$. We present them jointly below, setting $\cP_1=\{AABB,ABAB\}$, $\cP_2=\{AABB, ABBA\}$ and $\cR=\{ABAB,ABBA\}$, for convenience.

 We say that an event ${\mathcal A}_n$ holds \emph{asymptotically almost surely}, or shortly, \emph{a.a.s.}, if $\Pr({\mathcal A}_n)\to1$, as $n\to\infty$. Also, for a sequence of random variables $X_n$, we write $X_n\sim a_n$ if $X_n/a_n$ converges to 1 in probability.

 \begin{theorem}\label{thm:r=2} We have

\begin{enumerate}
\item[(a)]  $z_{\{ABAB\}}(\RM^{(2)}_{n})\sim\sqrt{2n},\; z_{\{ABBA\}}(\RM^{(2)}_{n})\sim\sqrt{2n}$ (Baik and Rains~\cite{BaikRains}),
\item[(b)]  $z_{\{AABB\}}(\RM^{(2)}_{n})\sim\sqrt{n/\pi}$ (Justicz, Scheinerman and Winkler~\cite{JSW}),
\item[(c)]  A.a.s.\;$z_{\cP_i}(\RM^{(2)}_{n})=\Theta(n^{1/2})\;, i=1,2$ (Dudek, Grytczuk and Ruci\'nski~\cite{DGR-socks}),
\item[(d)]  $z_{\cR}(\RM^{(2)}_{n})\sim \frac n2$ (Scheinerman~\cite{Scheinerman1988}; Dudek, Grytczuk and Ruci\'nski~\cite{DGR-socks}).
\end{enumerate}
\end{theorem}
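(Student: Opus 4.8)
The statement to prove is Theorem~\ref{thm:r=2}, which collects four known results about $z_{\cP}(\RM^{(2)}_n)$. Since these are attributed to existing papers, my proof proposal really amounts to reconstructing the arguments behind each part, so I will sketch all four.

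\textbf{Plan for parts (a) and (b).} The key observation is a bijective correspondence between ordered $2$-matchings of size $n$ and fixed-point-free involutions on $[2n]$, or equivalently, between such matchings and certain arc diagrams studied in the theory of random sorting networks and the RSK correspondence. For part (a), I would note that an $\{ABAB\}$-clique is a set of edges that are pairwise \emph{crossing}, i.e.\ a sequence of arcs that are nested in the ``crossing'' sense; writing the matching as a sequence of openings and closings, a crossing clique corresponds to an increasing subsequence in an associated permutation, while a nesting clique ($\{ABBA\}$) corresponds to a decreasing one. The Baik--Rains result then follows from the Baik--Deift--Johansson-type asymptotics for the longest increasing subsequence of a random involution, which gives the $\sqrt{2n}$ leading constant (the same constant arising for both crossings and nestings by a complementation symmetry on the arc diagram). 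For part (b), an $\{AABB\}$-clique is a set of pairwise \emph{aligned} (non-crossing, non-nested) edges; here I would set up the matching as a random function and observe that a maximum aligned clique is obtained by a greedy left-to-right scan, reducing the problem to a renewal/record-type process whose expected length concentrates at $\sqrt{n/\pi}$ — the $1/\sqrt\pi$ reflecting a Gaussian waiting-time computation, which is the Justicz--Scheinerman--Winkler argument.

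\textbf{Plan for part (c).} Here we only want the order of magnitude $\Theta(n^{1/2})$ for $\cP_1 = \{AABB, ABAB\}$ and $\cP_2 = \{AABB, ABBA\}$. The upper bound $O(n^{1/2})$ is immediate: a $\cP_i$-clique contains in particular no forbidden pattern, and since $\cP_i \subsetneq \cP^{(2)}$, a $\cP_i$-clique is also a $\{P\}$-clique-free structure for the third pattern, so a.a.s.\ its size is dominated by considerations like those in (a),(b); more directly, a $\cP_i$-clique of size $m$ yields, by Dilworth/Erd\H os--Szekeres applied to the two allowed patterns, a $\{P\}$-clique of size $\Omega(\sqrt m)$ for some single $P$, and the single-pattern cliques are a.a.s.\ $O(\sqrt n)$ by (a),(b), forcing $m = O(n)$ — I would need the sharper route: observe that a $\cP_i$-clique, read left to right, is equivalent to a pattern-avoiding subsequence in the associated permutation, and a random permutation has no such structure longer than $O(\sqrt n)$ by the Erd\H os--Szekeres/Vershik--Kerov bounds. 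For the lower bound $\Omega(n^{1/2})$ one uses the second moment method: partition $[2n]$ into $\sqrt n$ blocks, show that with the right greedy construction one can a.a.s.\ find a $\cP_i$-clique of size $c\sqrt n$, because $\cP_i$ always contains $AABB$, which lets edges be ``stacked'' consecutively. This is the Dudek--Grytczuk--Ruci\'nski argument.

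\textbf{Plan for part (d), the main step.} This is the most interesting case: $\cR = \{ABAB, ABBA\}$ is exactly the set of \emph{non-aligned} patterns, i.e.\ two edges form a pattern in $\cR$ iff the two opening vertices precede both closing vertices of the pair — equivalently, iff their ``opening intervals'' (from first to second vertex) overlap. So an $\cR$-clique is a family of edges whose opening intervals pairwise intersect; by the Helly property on the line, this means all these intervals share a common point, i.e.\ there is a single coordinate $t \in [2n]$ such that every edge of the clique has exactly one endpoint $\le t$ and one endpoint $> t$. Thus $z_{\cR}(\RM^{(2)}_n)$ equals the maximum over $t$ of the number of edges crossing the cut at $t$, which is $\max_t X_t$ where $X_t$ counts edges with one endpoint in $[t]$ and one in $[2n]\setminus[t]$. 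For a uniformly random matching, $X_t$ is hypergeometric-like with mean $\approx \frac{t(2n-t)}{2n-1}$, maximized around $t = n$ with value $\to n/2$; concentration (via, say, Azuma or a direct variance bound) then gives $\max_t X_t \sim n/2$. The plan is: (i) prove the Helly reduction carefully, handling the degenerate case where an opening interval is a single edge already closed; (ii) compute $\E X_t$ and $\mathrm{Var}\, X_t$; (iii) show $X_n$ concentrates and that no other cut exceeds it, yielding the claimed asymptotics. The main obstacle is step (i) — verifying that ``pairwise non-aligned'' genuinely forces a common transversal point rather than merely pairwise-overlapping intervals, and in particular checking that an edge $\{a,b\}$ contributes to the cut at $t$ exactly when $a \le t < b$, so that Helly's theorem on intervals applies cleanly.
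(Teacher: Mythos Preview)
The paper only supplies a proof for part~(c); parts (a), (b), (d) are quoted from the literature, and (d) is also recovered as the $r=2$ case of Theorem~\ref{thm:random_r-partite}, whose proof (partition $[2n]$ into two blocks and count spanning edges, then apply Azuma--Hoeffding) is precisely your Helly/cut argument in disguise. So your plan for (d) is fine and essentially coincides with the paper's.

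For part~(c), however, your upper bound argument has a genuine gap. Your Dilworth step only yields: a $\cP_i$-clique of size $m$ contains a single-pattern clique of size $\Omega(\sqrt m)$, and since single-pattern cliques are a.a.s.\ $O(\sqrt n)$, this gives $m=O(n)$, not $m=O(\sqrt n)$. Your ``sharper route'' via pattern-avoiding subsequences is not a valid reduction: a $\cP_i$-clique is a sub-\emph{matching} on $2m$ of the $2n$ vertices, not a subsequence of a fixed permutation of length $n$, and Vershik--Kerov gives nothing about arbitrary pattern-avoidance lengths in this setting. The paper's argument is different and much shorter: a $\cP_1$-clique is exactly a nesting-free matching and a $\cP_2$-clique is exactly a crossing-free matching; Stanley showed that the number of such matchings on $2k$ vertices is the Catalan number $C_k<4^k$. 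Plugging $a_{\cP_i}(k)\le 4^k$ into the first-moment bound (Lemma~\ref{meta} with $x=0$) gives $\E X_k\to 0$ for $k\gg\sqrt n$, hence a.a.s.\ $z_{\cP_i}(\RM^{(2)}_n)=O(\sqrt n)$.

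Your lower bound plan for (c) is also more elaborate than necessary. Since $AABB\in\cP_i$ for both $i$, monotonicity gives $z_{\cP_i}(\RM^{(2)}_n)\ge z_{\{AABB\}}(\RM^{(2)}_n)$, and the latter is $\Theta(\sqrt n)$ by part~(b) (or by Theorem~\ref{thm:random_one_pattern}). No second-moment or block construction is needed.
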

Let us remark that part (c) was given in  \cite{DGR-socks} without proof, so we prove it in Section~\ref{generic}.

 But first we  emphasize the usefulness of the results in Theorem~\ref{thm:r=2} for various classes of graphs defined on randomly selected intervals.
 Indeed, any ordered matching $M$ of size $n$ can be interpreted as a set of $n$ intervals (identified with the edges of $M$) with no common endpoints. For a subset $\cP\subset\cP^{(2)}$ we may define a graph $G(M,\cP)$ on vertex set $V(G)=M$, where for any $e,f\in M$, we have $\{e,f\}\in E(G)$ if $e$ and $f$ form a pattern belonging to $\cP$. The six non-trivial subsets of $\cP^{(2)}$ give rise to six well studied types of graphs: interval graphs ($\cP=\cR$), overlap graphs, a.k.a. circle graphs ($\cP=\{ABAB\}$), containment graphs, a.k.a. permutation graphs ($\cP=\{ABBA\}$) and their complements. Thus, Theorem~\ref{thm:r=2} provides estimates on their clique and independence numbers. For instance, parts (d) and (b) yield, respectively, estimates on the clique number and the independence number of a random interval graph of size $n$, studied already by Scheinerman in 1988 (\cite{Scheinerman1988}).

\begin{figure}[ht]

\begin{center}
\scalebox{0.7}
{
\begin{tikzpicture}
\begin{axis}[
    xmin=0, xmax=40,
    ymin=0, ymax=40,
    xtick={0,5,15,25,35,40},
    ytick={0,5,15,25,35,40},
    xticklabels={\ ,\underbar{$A$},\underbar{$B$},\underbar{$B$}, \underbar{$A$},\ },
    yticklabels={\ ,$\overline{B}$,$\overline{A}$,$\overline{A}$, $\overline{B}$,\ },
    legend pos=north west,
    ymajorgrids=true,
    xmajorgrids=true,
    grid style=dashed,
]

 \addplot[color=blue, fill=blue, fill opacity=0.5, mark=*,mark size=1.5pt]
 coordinates {
        (5, 15)
        (5, 25)
        (35, 25)
        (35, 15)
        (5, 15)
    };

    \addplot[color=red,  fill=red, fill opacity=0.5, mark=*,mark size=1.5pt]
    coordinates {
        (15, 5)
        (15, 35)
        (25, 35)
        (25, 5)
        (15, 5)
    };
\end{axis}
\end{tikzpicture}
\begin{tikzpicture}
\begin{axis}[
    xmin=0, xmax=40,
    ymin=0, ymax=40,
    xtick={0,5,15,25,35,40},
    ytick={0,5,15,25,35,40},
    xticklabels={\ ,\underbar{$A$},\underbar{$B$},\underbar{$B$}, \underbar{$A$},\ },
    yticklabels={\ ,$\overline{B}$,$\overline{A}$,$\overline{B}$, $\overline{A}$,\ },
    legend pos=north west,
    ymajorgrids=true,
    xmajorgrids=true,
    grid style=dashed,
]

 \addplot[color=blue, fill=blue, fill opacity=0.5, mark=*,mark size=1.5pt]
 coordinates {
        (5, 15)
        (5, 35)
        (35, 35)
        (35, 15)
        (5, 15)
    };

    \addplot[color=red,  fill=red, fill opacity=0.5, mark=*,mark size=1.5pt]
    coordinates {
        (15, 5)
        (15, 25)
        (25, 25)
        (25, 5)
        (15, 5)
    };
\end{axis}
\end{tikzpicture}
}
\end{center}

\caption{Intersecting rectangles defined by \underbar{$ABB$}$\overline{B}$\underbar{$A$}$\overline{AAB}$ and \underbar{$ABB$}$\overline{B}$\underbar{$A$}$\overline{ABA}$.}
\label{fig:rectangles}

\end{figure}
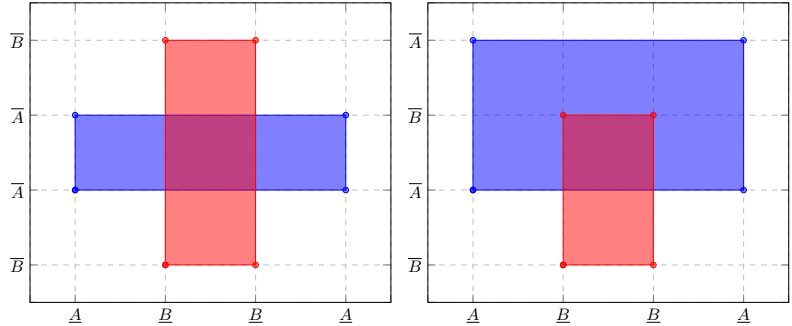

For $r>2$ one can also define graphs $G(M,\cP)$, though geometric interpretations do not come so profusely. A certain special class of $r$-word representable graphs was studied  in~\cite{KL}. In turn, ordered 4-matchings can be used to define families of axis-parallel rectangles (see, e.g.,~\cite{Chud}). Indeed, we may interpret the first two vertices of each edge as an interval on the $x$-axis and the other two as an interval on the $y$-axis, and the two intervals define a rectangle in a usual way. For example, $ABBBAAAB$ is split as \underbar{$ABB$}$\overline{B}$\underbar{$A$}$\overline{AAB}$, where the underlined and overlined symbols are placed on the corresponding axes (see Figure~\ref{fig:rectangles}, left). Such a pair of intersecting rectangles, where no corner of one rectangle lies inside the other, is called \emph{crossing}, and only one more 4-pattern, $ABBABAAB$, yields such a pair. An example of a non-crossing intersecting pair of rectangles is presented in Figure~\ref{fig:rectangles}, right).
Altogether, out of all 35 4-patterns, twelve define intersecting rectangles, and then, denoting their family by $\cP$, the random variable $z_{\cP}(\RM^{(4)}_{n})$ is the size of the largest clique in the intersection graph  of a random set of $n$ axis-parallel rectangles obtained from $\RM^{(4)}_{n}$.

 \subsection{Hyper-matchings}

 The results in parts (a) and (b) of Theorem~\ref{thm:r=2} were extended to arbitrary $r\geqslant 3$ (but in a weaker form) in \cite{JCTB_paper}, with a necessary restriction to \emph{collectable} patterns $P$, that is,  patterns that allow building arbitrarily large $P$-cliques.

\begin{theorem}[Dudek, Grytczuk and Ruci\'nski~\cite{JCTB_paper}]\label{thm:random_one_pattern} For a fixed $r\ge2$ and for every collectable $r$-pattern $P$, a.a.s.
	\[
	z_{\{P\}}(\RM^{(r)}_{n})=\Theta(n^{1/r}).
	\]
\end{theorem}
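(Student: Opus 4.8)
The plan is to establish matching upper and lower bounds, both of order $n^{1/r}$, and to do so separately since the arguments are structurally different. For the \emph{upper bound}, I would use a first-moment (union bound) argument. Fix a collectable $r$-pattern $P$. For a set $S$ of $k$ edges of $\RM^{(r)}_n$, let $X_S$ be the indicator that $S$ spans a $P$-clique; then $z_{\{P\}}(\RM^{(r)}_n)\ge k$ iff $\sum_S X_S\ge 1$, and it suffices to show $\E\sum_S X_S=\binom{n}{k}\Pr(S \text{ is a }P\text{-clique})\to 0$ when $k=C n^{1/r}$ for a suitable constant $C$. The key estimate is that the probability a \emph{specific} $k$-set of edges forms a $P$-clique decays like $c^{-\binom{k}{2}}$ (roughly), because each of the $\binom{k}{2}$ pairs must independently — or nearly so — realize one prescribed pattern out of $\tfrac12\binom{2r}{r}$; more carefully one computes the number of ordered $r$-matchings on $rk$ labelled vertices that are $P$-cliques and divides by $\frac{(rk)!}{(r!)^k k!}$. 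Comparing $\binom{n}{k}\approx n^k$ against a super-exponentially small pattern-probability forces $k=O(n^{1/r})$; balancing the two gives the $n^{1/r}$ threshold, since $n^k\cdot c^{-k^2}\to0$ precisely when $k\gg\log n$, but the sharper count of $P$-clique matchings (which grows only polynomially in the number of vertices, i.e.\ like $(rk)!^{1/r}$-type behaviour reflecting that a $P$-clique of size $k$ lives essentially on an ``$r$-dimensional grid'' of side $k$) replaces $c^{-k^2}$ by something like $k!^{-(r-1)}/(rk)!\cdot(\text{poly})$, whose balance point with $n^k$ is $k\asymp n^{1/r}$.

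For the \emph{lower bound} I would exhibit, a.a.s., a $P$-clique of size $\Omega(n^{1/r})$ inside $\RM^{(r)}_n$. Here I would exploit collectability of $P$ directly: since $P$ is collectable, there is a ``canonical'' way to stack edges so that every pair forms $P$ (as in Figure~\ref{ESZ2} for $r=2$), and this canonical $P$-clique of size $k$ occupies a prescribed order-isomorphism type on $rk$ consecutive-in-relative-order vertices. I would split $[rn]$ into $r$ blocks of size $n$ each (or a finer partition adapted to $P$), and look for a long increasing-type chain: greedily, or via a second-moment computation, show that the number of edges that are ``threaded'' consistently through the $r$ blocks in the pattern dictated by $P$ is a.a.s.\ at least $cn^{1/r}$. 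Concretely, this reduces to a longest-increasing-subsequence-flavoured statement in $r$ dimensions: a random $r$-matching induces, on suitably chosen coordinates, something like a random point set in $[0,1]^r$ (or a random $r$-dimensional permutation/Latin-square-type object), and one needs the longest monotone chain there to have length $\Theta(n^{1/r})$, which is the classical order of magnitude for such higher-dimensional Erd\H{o}s–Szekeres / Ulam problems.

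I would organize the write-up as: (1) a lemma computing the exact or asymptotic number of $P$-clique $r$-matchings on $[rk]$ for collectable $P$, hence $\Pr(\RM^{(r)}_k \text{ is a }P\text{-clique})$; (2) the first-moment upper bound deducing $z_{\{P\}}(\RM^{(r)}_n)=O(n^{1/r})$ a.a.s.; (3) the constructive/second-moment lower bound giving $z_{\{P\}}(\RM^{(r)}_n)=\Omega(n^{1/r})$ a.a.s.; and note that $r=2$ recovers the $\Theta(n^{1/2})$ scaling of Theorem~\ref{thm:r=2}, while the logarithmic refinements of \cite{JCTB_paper} (e.g.\ a.a.s.\ vs.\ in expectation, and the precise constant) are what is being deliberately relaxed by the $\Theta(\cdot)$ statement.

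The step I expect to be the main obstacle is the lower bound, specifically proving that the higher-dimensional monotone-chain length is $\Omega(n^{1/r})$ with high probability rather than merely in expectation: a naive greedy argument tends to lose logarithmic factors, and a direct second-moment argument on the count of size-$k$ $P$-cliques suffers from the usual problem that the variance is inflated by highly-overlapping clusters of cliques sharing many edges, so concentration is delicate. I would handle this either by a dependency/martingale (Azuma-type) exposure of the matching edge-by-edge, or — following the likely strategy of \cite{JCTB_paper} — by a subadditivity argument: show $z_{\{P\}}$ is superadditive enough under concatenation of independent sub-matchings that a Fekete-type limit plus a one-sided large-deviation bound yields the a.a.s.\ lower bound from the expectation bound. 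Verifying that collectability of $P$ supplies exactly the combinatorial ``gluing'' needed for this superadditivity is the crux.
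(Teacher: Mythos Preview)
This theorem is not proved in the present paper: it is quoted verbatim from \cite{JCTB_paper} and used as a black box (for monotonicity lower bounds and in Section~\ref{pos}). So there is no ``paper's own proof'' here to compare against; I can only assess your proposal on its merits and against the techniques the paper does expose.

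Your upper-bound strategy is the right one (first moment, as in Lemma~\ref{meta}), but your first attempt at the probability estimate is wrong and would give the wrong threshold. The decay $c^{-\binom{k}{2}}$ for a fixed $k$-set of edges to be a $P$-clique, if it were correct, would make $\E X_k\to 0$ already for $k\gg\sqrt{\log n}$, not $k\gg n^{1/r}$. The point you are circling around but never state cleanly is that for a \emph{collectable} $P$ there is \emph{exactly one} $P$-clique on any ordered $rk$-vertex set, i.e.\ $a_{\{P\}}(k)=1$. Plugging this into Lemma~\ref{meta} with $C=1$ and $x=0$ gives $\E X_k\le (C' n/k^r)^k\to 0$ once $k\ge C'' n^{1/r}$, which is the whole upper bound in one line. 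Your talk of ``$(rk)!^{1/r}$-type behaviour'' and ``$r$-dimensional grids'' is post-hoc rationalisation of the correct exponent, not a computation.

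For the lower bound your list of candidate methods is plausible but none is carried out, and the one you flag as the main obstacle (second moment with overlapping cliques, or subadditivity) is not the route taken in \cite{JCTB_paper}. The method actually used --- visible in this paper's Appendix in the harmonious-pair lower bound and in Proposition~\ref{genH} --- is to partition $[rn]$ into blocks dictated by the splitting $P=S_1\cdots S_s$, blow up each vertex of a target $P$-clique of size $k\sim n^{1/r}$ into a block of size $\sim n^{1-1/r}$, use a spanning-edge lemma (Lemma~\ref{lemma:span_prob}) to guarantee many ``good'' edges, and then apply a Talagrand-type concentration inequality for random permutations (Lemma~\ref{tala}) to upgrade an $\Omega(n^{1/r})$ expectation to an a.a.s.\ statement without a logarithmic loss. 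Your greedy and subadditivity ideas would likely lose a $\log$-factor (cf.\ the bound in Proposition~\ref{genH}); the Talagrand step is exactly what removes it, and you do mention ``Azuma-type exposure'' as an option, which is in the right spirit but weaker than what is needed here.
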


The following characterization of collectable patterns was established in \cite[Proposition 2.1]{JCTB_paper}: an $r$-pattern $P$ is collectable if and only if it is \emph{splittable} into consecutive \emph{blocks}, $P=S_1\cdots S_s$, of the form $S_i=A^tB^t$ or $S_i=B^tA^t$. Obviously, such a splitting is unique.  For instance,
 \begin{equation}\label{P}
 P=AB|BBAA|BA|AAABBB|BBAA|BA|AB|AB
 \end{equation}
is a collectable $12$-pattern with the suitable splitting into eight blocks $S_i$.
 From this characterization, one can easily deduce that for every $r\geqslant 3$ there are exactly $3^{r-1}$ collectable patterns in $\cP^{(r)}$. It was also proved in \cite[Proposition 2.1]{JCTB_paper} that non-collectable patterns cannot even create $P$-cliques of size three.

Every collectable $r$-pattern $P$ determines a \emph{composition} $\lambda_P=(\lambda_1,\dots,\lambda_s)$ of the integer $r$, that is, an \emph{ordered partition} $r=\lambda_1+\cdots+\lambda_s$, where $\lambda_i=|S_i|/2$, for $i=1,\dots,s$.
  For the 12-pattern $P$ defined in~\eqref{P} above, we have $\lambda_P=(1,2,1,3,2,1,1,1)$.

Note that for a collectable pattern $P$ with $s$ blocks in its splitting there are exactly $2^{s-1}$  patterns $Q$ with $\lambda_Q=\lambda_P$ (including $P$).   Let us call such pairs $\{P,Q\}$ of collectable $r$-patterns \emph{harmonious}.
We denote by $\mathcal P(\lambda)$ the set of all patterns $P$ with composition $\lambda_P=\lambda$. So, all members of $\mathcal P(\lambda)$ are mutually harmonious. For $r=2$, patterns $ABAB$ and $ABBA$ form a (unique)  harmonious pair which seems to be a reason for the value of $z_{\cR}(\RM^{(2)}_{n})$ to a.a.s. be much larger than for the other two pairs (cf. Theorem~\ref{thm:r=2}).

 An important subclass of matchings is that of \emph{$r$-partite} $r$-matchings where every edge takes one vertex from each of the $r$ consecutive blocks of order $n$ (equivalently, these are the matchings with interval chromatic number $r$ -- cf. \cite{Pach-Tardos}).
 For instance, the following $3$-matching is visibly $3$-partite:
\[\color{Red}{ABCDEFG}\color{Green}{ACGEFBD}\color{Blue}{GFEDCBA}\color{black}.\]
 For $n=2$ we obtain the notion of \emph{$r$-partite patterns} which can also be characterized by having composition $\lambda^{(r)}:=(1,1,\dots,1)$ with all $r$ coordinates equal to 1.
Let $\cR^{(r)}:= \cP(\lambda^{(r)})$ be the set of all $r$-partite $r$-patterns. Clearly, $|\cR^{(r)}|=2^{r-1}$.
For instance, the family of 4-patterns defining intersecting rectangles,  mentioned in Section~\ref{graphs}, contains all 8 members of $\cR^{(4)}$.
It follows from these definitions that any pattern occurring in an $r$-partite matching $M$ must be $r$-partite itself. Conversely, every $\cP$-clique with all $\cP\subseteq \cR^{(r)}$  is an $r$-partite matching.
In particular, $\cR=\cR^{(2)}$, which
  leads to a reformulation of  Theorem~\ref{thm:r=2}(d):  \emph{The maximum size of a bipartite sub-matching in $\RM^{(2)}_{n}$ is asymptotic to $n/2$}.

Table \ref{table:relations} contains all patterns in $\cP^{(3)}$, among which $P_1,P_2,\ldots,P_9$ are collectable, while $P_0$ is the only pathological one. Moreover,  $\cR^{(3)}=\{P_6,P_7 P_8,P_9\}$. Clearly, all members of $\cR^{(3)}$ are mutually harmonious, so are
$P_2=AABBBA$ and $P_3=AABBAB$ (with composition $\lambda=(2,1)$) as well as $P_4=ABBBAA$ and $P_5=ABAABB$ (with composition $\lambda=(1,2)$).

\begin{table}[h!]
	\centering
	\begin{tabular}{ |c|c| }
		\hline
		$P_0=AABABB$&$P_5=ABAABB$\\
		\hline
		$P_1=AAABBB$&$P_6=ABBABA$\\
		\hline
		$P_2=AABBBA$&$P_7=ABBAAB$\\
		\hline
		$P_3=AABBAB$&$P_8=ABABBA$\\
		\hline
		$P_4=ABBBAA$&$P_9=ABABAB$\\
			\hline
	\end{tabular}
\caption{All possible 3-patterns.}
\label{table:relations}
\end{table}

\section{New results}
Theorem~\ref{thm:r=2} gives a pretty much  full picture of the order of magnitude of $z_{\cP}(\RM^{(2)}_{n})$, the size of the largest $\cP$-clique in $(\RM^{(2)}_{n})$.
For $r\ge3$, besides Theorem~\ref{thm:random_one_pattern} for $|\cP|=1$, the grasp on $z_{\mathcal P}(\RM_n)$ is still far from complete. In this section we present  some new results in this direction.

One case in which we are able to pinpoint the value of $z_{\mathcal P}(\RM^{(r)}_n)$ quite precisely is when $\cP =\cR^{(r)}$. Generalizing Theorem~\ref{thm:r=2}(d), we show that the largest $\cR^{(r)}$-clique in $\RM^{(r)}_{n}$ has size linear in $n$ and find the value of the multiplicative constant asymptotically.

\begin{theorem}\label{thm:random_r-partite}
For $r\ge2$, let $\cR^{(r)}$ be the set of all $r$-partite $r$-patterns. Then  a.a.s. \[z_{\cR^{(r)}}(\RM^{(r)}_{n})\sim\frac{(r-1)!}{r^{r-1}}n.\]
Equivalently,  $R_n^{(r)}$,  the largest (lexicographically first) $r$-partite sub-matching of $\RM^{(r)}_{n}$ has a.a.s. size
$
|R_n^{(r)}|\sim\frac{(r-1)!}{r^{r-1}}n.
$
\end{theorem}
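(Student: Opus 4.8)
The plan is to estimate $z_{\cR^{(r)}}(\RM^{(r)}_n)$ by the first and second moment method applied to the number of $r$-partite sub-matchings of a given size. First I would set up the combinatorial model: an $r$-partite sub-matching of size $k$ in an ordered $r$-matching of size $n$ corresponds to a choice of $k$ edges whose $rk$ vertices, when listed in increasing order, occupy positions so that the $i$-th vertex of each chosen edge lies before the $(i+1)$-st vertex of every chosen edge. Equivalently, after selecting $k$ edges from $\RM^{(r)}_n$, they form an $r$-partite configuration exactly when their induced ordered matching has interval chromatic number $r$, i.e.\ the $rk$ vertices split into $k$ consecutive vertices of ``level $1$'', then $k$ of level $2$, etc. Since $\RM^{(r)}_n$ is uniform, the probability that a fixed set of $k$ edges of $\cM^{(r)}_n$ (or rather, $k$ edges chosen by specifying which $rk$ of the $rn$ vertices they use and how they are matched) forms such a configuration can be computed exactly as a ratio of multinomial-type counts; asymptotically this probability behaves like $\left(\tfrac{(r-1)!}{r^{r-1}}\right)^{k}$ times lower-order factors, which is where the constant $\tfrac{(r-1)!}{r^{r-1}}$ enters. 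Concretely, the chance that $r$ random disjoint $r$-sets on a line form an $r$-partite pattern is $(r-1)!/r^{r-1}$ (the number of $r$-partite $r$-patterns, $2^{r-1}$, divided by the total $\tfrac12\binom{2r}{r}$, after the appropriate normalization), and the events for disjoint edge-pairs are ``almost independent''.

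For the upper bound I would compute $\E[X_k]$, where $X_k$ is the number of $r$-partite sub-matchings of size $k$, and show that for $k = (1+\eps)\tfrac{(r-1)!}{r^{r-1}}n$ we have $\E[X_k]\to 0$, so a.a.s.\ $z_{\cR^{(r)}}(\RM^{(r)}_n) \le k$ by Markov's inequality. The count $\E[X_k] = \binom{n}{k}\cdot p_k$, where $p_k$ is the probability that a fixed $k$-subset of the edges of $\RM^{(r)}_n$ is $r$-partite; using Stirling and the exact formula for $p_k$ one checks that $\tfrac1n\log \E[X_k]$ is maximized (and equals $0$) precisely at the claimed constant, so it becomes negative for larger $k$. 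For the lower bound I would fix $k = (1-\eps)\tfrac{(r-1)!}{r^{r-1}}n$ and apply the second moment method: show $\E[X_k^2] \le (1+o(1))\E[X_k]^2$, so that by Chebyshev $X_k > 0$ a.a.s. The variance computation reduces to summing over pairs of $k$-sets of edges sharing $j$ common edges, and the dominant contribution must come from $j$ close to $k^2/n$ (i.e.\ from ``typical'' overlap), with the small-overlap and large-overlap regimes being negligible — this is the standard but technically delicate part.

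The main obstacle I expect is controlling the second moment, specifically showing that the correlation between the ``$r$-partiteness'' of two overlapping $k$-sets of edges does not blow up the variance. Unlike in many clique-counting arguments, here the event ``$S$ is $r$-partite'' is a global constraint on how all the vertices of $S$ interleave, so for two $k$-sets $S, S'$ with $|S\cap S'| = j$ one needs a clean formula or sharp bound for $\Pr(S \text{ and } S' \text{ both } r\text{-partite})$ and must verify it is at most $\Pr(S)\Pr(S')$ times a factor that, summed against $\binom{k}{j}\binom{n-k}{k-j}$, stays bounded. I would handle this by conditioning on the vertex set and the ``level assignment'' of $S\cap S'$ and arguing that the two halves $S\setminus S'$ and $S'\setminus S$ then behave independently up to negligible error, using the near-product structure of the uniform ordered matching. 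A secondary technical point is translating between the $\sim$ statement and the two one-sided a.a.s.\ bounds, and verifying that the lexicographically-first maximal $r$-partite sub-matching $R_n^{(r)}$ has the same asymptotic size — this follows because $z_{\cR^{(r)}}$ is exactly $|R_n^{(r)}|$ by definition (any $r$-partite sub-matching is a $\cR^{(r)}$-clique and vice versa), so no extra work is needed there beyond restating.
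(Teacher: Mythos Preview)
Your first-moment upper bound does not give the sharp constant. With $X_k$ the number of $r$-partite sub-matchings of size $k$, a fixed $k$-subset of edges of $\RM^{(r)}_n$ is (by symmetry) a uniform ordered $r$-matching of size $k$, so $p_k=(k!)^r(r!)^k/(rk)!$ and hence $\E[X_k]=\binom{n}{k}(k!)^r(r!)^k/(rk)!\asymp\binom{n}{k}c_r^{\,k}$ with $c_r=(r-1)!/r^{r-1}$. At $k=(1+\eps)c_r n$ this is still exponentially large: writing $k=cn$, one has $\tfrac1n\log\E[X_k]\to H(c)+c\log c_r$, and at $c=c_r$ this equals $-(1-c_r)\log(1-c_r)>0$. (Concretely, for $r=2$ the zero of $H(c)-c\log2$ is at $c\approx0.7729$, not at $1/2$.) So Markov only yields $z_{\cR^{(r)}}\le c^*n$ for some $c^*>c_r$, and your claim that ``$\tfrac1n\log\E[X_k]$ is maximized (and equals $0$) precisely at the claimed constant'' is false. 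Your heuristic that the constant is ``the chance that two random edges form an $r$-partite pattern'' is also off: that probability is $2^r/\binom{2r}{r}$, not $c_r$.

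The paper avoids this entirely by a different decomposition. Any $r$-partite sub-matching lies inside the set of edges spanning some partition $\mathbf{B}=(B_1,\dots,B_r)$ of $[rn]$ into consecutive blocks, so $z_{\cR^{(r)}}(\RM^{(r)}_n)=\max_{\mathbf{B}}X_{\mathbf{B}}$ where $X_{\mathbf{B}}$ counts edges spanning $\mathbf{B}$. Then $\E X_{\mathbf{B}}=\prod|B_i|\big/\binom{rn-1}{r-1}$, which by AM--GM is maximized at the equipartition with value $\sim c_r n$; an Azuma--Hoeffding bound (via the permutational model) gives $|X_{\mathbf{B}}-\E X_{\mathbf{B}}|<\sqrt{\omega n\log n}$ with probability $1-n^{-\omega/2r}$, and a union bound over the $O(n^{r-1})$ choices of $\mathbf{B}$ finishes both directions at once. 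The key idea you are missing is this reduction to a polynomial family of \emph{linear} statistics $X_{\mathbf{B}}$, each of which concentrates; counting sub-matchings of a fixed size $k$ cannot recover the sharp constant from above because there are exponentially many near-optimal $r$-partite sub-matchings inside the optimal one.
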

It follows that the size of the largest clique in the intersection graph  of a random set of $n$ axis-parallel rectangles obtained from $\RM^{(4)}_{n}$ (as defined in Section~\ref{graphs}) is a.a.s. $\Theta(n)$.

 Our next result determines, for all $r\ge2$,  the order of magnitude of $z_{\mathcal P}(\RM^{(r)}_{n})$ for  \emph{all} sets of $r$-patterns $\mathcal{P}$ with $|\cP|=2$.
  We call a pair of  $r$-patterns $\{P,Q\}$ a \emph{mismatch} if one of them is collectable, while the other is not, or both are collectable, but then not harmonious.

\begin{theorem}\label{thm:random_two_patterns_gen}
For $r\ge2$, let $\cP=\{P,Q\}$ be a pair of distinct $r$-patterns. Then a.a.s.
\[
z_{\mathcal P}(\RM^{(r)}_{n})=\begin{cases}\Theta(n^{1/(r-1)})&\text{if $P$ and $Q$ are harmonious,}\\
	\Theta(n^{1/r})&\text{if $P$ and $Q$ form a mismatch,}\\
z&\text{if neither $P$ nor $Q$ is collectable,}
\end{cases}
\]
where $z=z_{\cP}$ is an integer, $2\le z\le 5$.
\end{theorem}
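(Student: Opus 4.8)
The plan is to treat the three cases separately, in each case proving a matching upper and lower bound on $z_{\cP}(\RM^{(r)}_n)$.

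\textbf{The harmonious case.} Here $P$ and $Q$ share a common composition $\lambda=(\lambda_1,\dots,\lambda_s)$ of $r$, and every $\cP$-clique is then a matching whose edges are split into $s$ consecutive bands, with the $i$-th band contributing $\lambda_i$ vertices per edge; moreover, on each of the $s$ ``coordinates'' the restricted structure behaves like an $r=2$ (or $r=1$) clique of one of the two harmonious $2$-patterns. Concretely, a $\cP$-clique of size $k$ packs $k$ edges so that inside each band the $k$ sub-edges form, pairwise, one of the patterns allowed by $P,Q$ restricted to that band; the key point is that harmonious pairs on a band of width $2\lambda_i$ allow $\Theta(k^{\lambda_i})$-type freedom because the only genuine ``ordering constraint'' is lost when $P$ and $Q$ are \emph{both} available (just as $\{ABAB,ABBA\}$ imposes no constraint at all for $r=2$). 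Counting: a random ordered $r$-matching induces, inside a fixed set of $k$ candidate edges, the required pattern on band $i$ with probability roughly $c\, k^{\lambda_i}/(2\lambda_i k)!$-type expression, and multiplying over the $s$ bands (using $\sum\lambda_i=r$) the first-moment threshold sits at $k\asymp n^{1/(r-1)}$: one coordinate is ``free of order'' and contributes a factor $n$ worth of room, the remaining $r-1$ units of weight each cost a power of $k$. For the lower bound I would use the second moment method / a greedy embedding, building an $\cP$-clique band by band inside a random linear order on $rn$ vertices, exactly paralleling the proof of Theorem~\ref{thm:r=2}(d) which handles $r=2$; alternatively one can reduce to Theorem~\ref{thm:r=2}(d) on the ``free'' coordinate and to Theorem~\ref{thm:random_one_pattern}-type concentration on the others.

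\textbf{The mismatch case.} If $\{P,Q\}$ is a mismatch then there is \emph{no} composition common to both (or one of them is non-collectable), so any $\cP$-clique of size $\ge 3$ must be monochromatic in one of the two patterns --- this is exactly the content of the characterization of collectable patterns in \cite[Proposition~2.1]{JCTB_paper}, which forbids large $P$-cliques for non-collectable $P$ and forbids mixing two non-harmonious collectable patterns in a triple. Hence for $k\ge3$ a $\cP$-clique is a $\{P\}$-clique or a $\{Q\}$-clique (with at least one of $P,Q$ collectable), and $z_{\cP}(\RM^{(r)}_n)=\max\{z_{\{P\}}(\RM^{(r)}_n),z_{\{Q\}}(\RM^{(r)}_n)\}$ up to an additive constant; the non-collectable member contributes $O(1)$, the collectable member contributes $\Theta(n^{1/r})$ by Theorem~\ref{thm:random_one_pattern}, giving $\Theta(n^{1/r})$.

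\textbf{The non-collectable case.} If neither $P$ nor $Q$ is collectable, then neither alone admits a clique of size $3$, and a two-colour ``clique'' of size $\ge 6$ would force (by pigeonhole on pairs, or by a short case analysis of which $r$-patterns can coexist pairwise among $\ge 3$ edges) a monochromatic triple, contradiction; this gives the universal upper bound $z_{\cP}\le 5$. The lower bound $z_{\cP}\ge 2$ is trivial (any two disjoint edges form some pattern, and $\cP$ is applied only to pairs, so a size-$2$ matching realizing $P$ exists deterministically). That $z_{\cP}$ is a \emph{constant} a.a.s.\ (not merely $O(1)$) follows because for each fixed $k\in\{2,3,4,5\}$ the event ``$\RM^{(r)}_n$ contains a $\cP$-clique of size $k$'' has probability tending to a limit: a first-moment computation shows the expected number of $\cP$-cliques of size $k$ is $\Theta(1)$ (it neither vanishes nor blows up, since each of the $\binom k2$ pattern constraints costs a constant factor and there are $\Theta(n^{rk})$ placements against $\Theta(n^{rk})$ normalization), and a short second-moment / inclusion-exclusion argument pins the probability away from $0$ and $1$ only at the critical $k=z$; determining the exact integer $z\in\{2,\dots,5\}$ for each such pair is then a finite computation.

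\textbf{Main obstacle.} The delicate part is the harmonious case: extracting the exponent $1/(r-1)$ requires correctly identifying \emph{which} single coordinate (band) is ``order-free'' and organizing the second-moment argument so that the contributions of the $s$ bands multiply cleanly despite the bands sharing the same $rn$ underlying vertices; the bookkeeping is essentially an $r$-fold iteration of the $r=2$ interval-graph argument behind Theorem~\ref{thm:r=2}(d), and making the dependencies across bands provably negligible (so that a.a.s.\ concentration holds, not just in expectation) is where the real work lies. The non-collectable case is easy in spirit but tedious: one must actually run the finite case analysis to certify the value of $z$ for every non-collectable pair.
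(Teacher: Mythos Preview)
Your mismatch case rests on a false claim. You assert that a $\{P,Q\}$-clique of size at least $3$ must be monochromatic when $P,Q$ are both collectable but non-harmonious. Already for $r=2$ this fails: with $P=AABB$ and $Q=ABAB$ (a mismatch, since $\lambda_P=(2)\neq(1,1)=\lambda_Q$), the three edges $\{1,2\},\{3,5\},\{4,6\}$ form a $\{P,Q\}$-clique in which the first two pairs realize $AABB$ while the last realizes $ABAB$. In fact such mixed cliques can be as large as $\Theta(n^{1/r})$, so your reduction to $\max\{z_{\{P\}},z_{\{Q\}}\}$ is not available. The paper's upper bound goes through an entirely different route: it shows that the family of $\{P,Q\}$-cliques is \emph{reconstructible} from its trace (an $r$-ary word recording, for each position, which occurrence of its edge it is), by induction on $r$ using Observation~\ref{obs:1} to pass from $\{P,Q\}$ to a mismatched pair $\{P^{-j},Q^{-j}\}$. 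Reconstructibility gives $a_{\{P,Q\}}(k)\le r^{rk}$ and then Lemma~\ref{meta} with $x=0$ yields $O(n^{1/r})$.

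Your non-collectable case also contains a computational error. For fixed $k$ the expected number of $\cP$-cliques of size $k$ is not $\Theta(1)$ but $\Theta(n^k)$: there are $\Theta(n^{rk})$ vertex placements, a constant number $a_{\cP}(k)$ of clique shapes, and the probability that $k$ prescribed edges all lie in $\RM^{(r)}_n$ is $\alpha^{(r)}_{n-k}/\alpha^{(r)}_n=\Theta(n^{-k(r-1)})$. Consequently the event that a $\cP$-clique of size $z=z_{\cP}$ is present has probability tending to~$1$, not to some value strictly between $0$ and $1$; the paper makes this precise via Proposition~\ref{fixH}, which shows that any fixed ordered matching $H$ embeds in $\RM^{(r)}_n$ a.a.s. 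Your Ramsey bound $z\le 5$ is correct and matches the paper.

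In the harmonious case your intuition about a single ``free'' coordinate is right, and the paper confirms it sharply: one shows $a_{\{P,Q\}}(k)=k!$ exactly (the free permutation links the block where $P$ and $Q$ agree to the block where they differ), and then Lemma~\ref{meta} with $x=1$ gives the upper bound. The lower bound is obtained not by a second-moment argument but by a deletion method combined with Talagrand's inequality for random permutations.
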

\noindent It is worth noticing that the value of $z_{\mathcal P}(\RM^{(r)}_{n})$ in the above theorem does not depend on $\lambda_P$ and $\lambda_Q$ but only on $r$ and the structural relation between $P$ and $Q$. As we will demonstrate in the proof, the constant $z$ appearing in the last statement equals the largest size of a $\{P,Q\}$-clique whatsoever.

As an application, taking $P=ABBBAAAB$ and $Q=ABBABAAB$, we infer that the largest subset of mutually crossing rectangles among $n$ axis-parallel random rectangles obtained from $\RM^{(4)}_{n}$ (as described in Section~\ref{graphs})  a.a.s. has size $\Theta(n^{1/4})$.

\medskip

For \emph{triples} of  $r$-patterns we are far from a complete picture. For $r=3$, we  only know the answer when all three 3-patterns are tripartite.
 In fact, the proof can be carried over for any $r\ge3$ (and $2^{r-1}-1$ $r$-partite $r$-patterns), so we state the result in its general form.

\begin{proposition}\label{thm:random_three_patterns_stronger}
For $r\ge2$, let $\mathcal{P}\subset\cR^{(r)}$ with $|\cP|=2^{r-1}-1$. Then, a.a.s.\ $z_{\mathcal P}(\RM^{(r)}_{n})=\Theta(n^{1-1/r})$.
\end{proposition}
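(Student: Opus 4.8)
\textbf{Proof proposal for Proposition~\ref{thm:random_three_patterns_stronger}.}

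The plan is to establish matching upper and lower bounds for $z_{\mathcal P}(\RM^{(r)}_{n})$ when $\mathcal P\subset\cR^{(r)}$ misses exactly one $r$-partite pattern, say $\mathcal P=\cR^{(r)}\setminus\{P^\ast\}$. For the \emph{upper bound}, I would first observe that any $\mathcal P$-clique $M$ is in particular an $r$-partite sub-matching avoiding the forbidden pattern $P^\ast$, so it suffices to show that a.a.s.\ every $r$-partite sub-matching of $\RM^{(r)}_n$ that avoids $P^\ast$ has size $O(n^{1-1/r})$. The key structural point is that avoiding one pattern from $\cR^{(r)}$ imposes a ``Dilworth-type'' restriction on the permutation-like data encoding how edges thread the $r$ blocks: an $r$-partite matching of size $m$ records, for each pair of consecutive blocks, a permutation of $[m]$, and forbidding $P^\ast$ forces a monotonicity/avoidance condition that, via an Erd\H{o}s--Szekeres argument, would yield a large homogeneous sub-clique of size $\Omega(m^{1/(r-1)})$ all of whose edges repeat a single $r$-partite pattern. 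Since $\RM^{(r)}_n$ a.a.s.\ contains no $\{P\}$-clique of size $\omega(n^{1/r})$ for any fixed $P$ (the upper bound half of Theorem~\ref{thm:random_one_pattern}, which holds for all patterns, collectable or not, with the convention that non-collectable ones give only constant cliques), we get $m^{1/(r-1)}=O(n^{1/r})$, i.e.\ $m=O(n^{(r-1)/r})=O(n^{1-1/r})$.

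For the \emph{lower bound} I would exhibit a $\mathcal P$-clique of size $\Omega(n^{1-1/r})$ a.a.s. The natural candidate is to take a large collectable $\{Q\}$-clique for a well-chosen harmonious family and prune it. Concretely, pick two harmonious $r$-patterns $Q_1,Q_2\in\mathcal P$ (they exist once $r\ge3$ and $|\mathcal P|=2^{r-1}-1\ge3$; for $r=2$ the statement is vacuous or trivial since $2^{r-1}-1=1$ and $n^{1-1/2}=n^{1/2}$ matches Theorem~\ref{thm:r=2}(a)). By the lower-bound half of Theorem~\ref{thm:random_two_patterns_gen}, a.a.s.\ $\RM^{(r)}_n$ contains a $\{Q_1,Q_2\}$-clique of size $\Omega(n^{1/(r-1)})$ — but that is too small. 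Instead, the right move is to use a \emph{partite-blocks} construction: condition on the restriction of $\RM^{(r)}_n$ to the $r$ natural consecutive blocks of size $n$, note that within each block the induced order is uniform, and greedily build an $r$-partite sub-matching by a first-fit / longest-increasing-subsequence procedure across the blocks. A cleaner route: show directly that a.a.s.\ $\RM^{(r)}_n$ contains an $r$-partite sub-matching of size $\Theta(n^{1-1/r})$ all of whose edges pairwise form patterns \emph{only} from a prescribed $(2^{r-1}-1)$-subset — this can be done by taking the $r$-partite sub-matching guaranteed (in slightly weaker, but sufficient, form) by an iterated Erd\H{o}s--Szekeres selection that at each of the $r-1$ ``consecutive-block'' stages discards the coordinate direction that would create the forbidden $P^\ast$; iterating $r-1$ monotone selections on sets of size $n$ costs a factor $n^{-1/r}$ each time only if done carelessly, so one must instead split into $n^{1/r}$ groups and recurse, yielding $n\cdot(n^{-1/r})^{?}$ — the bookkeeping gives exactly $n^{1-1/r}$.

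The main obstacle, and the part I expect to require the most care, is the lower bound: one must produce a sub-matching that simultaneously (i) is large, of order $n^{1-1/r}$, (ii) is genuinely $r$-partite, and (iii) avoids the single forbidden pattern $P^\ast$ on \emph{every} pair of its edges, not just on consecutive ones. Condition (iii) is global and does not obviously follow from a local greedy choice, so the argument likely needs a ``transitivity'' observation: for $r$-partite patterns, the relation ``$e$ and $f$ avoid $P^\ast$'' decomposes coordinatewise into monotone relations on each of the $r-1$ consecutive-block permutations, and a set of edges that is pairwise monotone in each coordinate (i.e.\ forms a chain or antichain per coordinate) automatically avoids $P^\ast$. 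Thus the task reduces to finding, inside $r-1$ uniformly random permutations of $[n]$ (the consecutive-block data of $\RM^{(r)}_n$, which are asymptotically independent and uniform), a common monotone subsequence of length $\Theta(n^{1-1/r})$ with prescribed monotonicity types; the existence of such a set of size $n^{1-1/(r-1+1)}=n^{1-1/r}$ is a standard iterated-Erd\H{o}s--Szekeres fact, and its a.a.s.\ optimality matches the upper bound. The matching upper bound itself is comparatively routine given Theorem~\ref{thm:random_one_pattern} and the coordinatewise decomposition, so the proposition follows.
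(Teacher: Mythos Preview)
Both halves of your proposal have genuine gaps.

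\textbf{Lower bound.} Your final reduction is to finding, in the $r-1$ consecutive-block permutations, a \emph{common monotone} subsequence of length $\Theta(n^{1-1/r})$. But a common monotone subsequence is precisely a single-pattern clique, and by Theorem~\ref{thm:random_one_pattern} its size in $\RM^{(r)}_n$ is a.a.s.\ only $\Theta(n^{1/r})$ --- you have the exponent inverted. The paper's argument is Dilworth's lemma (Lemma~\ref{DL}, via Lemma~\ref{template}) applied to the large $r$-partite sub-matching $R^{(r)}_n$ of Theorem~\ref{thm:random_r-partite}: one defines a poset on $R^{(r)}_n$ whose chains are exactly $P^\ast$-cliques (this is a genuine partial order because $P^\ast\in\cR^{(r)}$ makes the relation coordinatewise, hence transitive). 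The longest chain has size $O(n^{1/r})$ by Theorem~\ref{thm:random_one_pattern}, while $|R^{(r)}_n|=\Theta(n)$, so the largest antichain --- which \emph{is} a $\cP$-clique --- has size $\Omega(n^{1-1/r})$. You had this poset idea in your ``transitivity observation'' but then pointed it at the wrong (monochromatic) target.

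\textbf{Upper bound.} Your key structural claim, that every $\cP$-clique of size $m$ contains a single-pattern sub-clique of size $\Omega(m^{1/(r-1)})$, is not justified and does not follow from any routine Erd\H os--Szekeres argument. Already for $r=3$ the natural approach falls short: if the longest $(12,21)$-subsequence has length $c$, Dilworth gives a $(21,\cdot)$-set of size $m/c$, and Erd\H os--Szekeres on the second coordinate then yields only $\sqrt{m/c}$; balancing gives $m^{1/3}$, not the $m^{1/2}$ you need. The paper bypasses any such structural statement and uses the first-moment method (Lemma~\ref{meta}) directly: a $\cP$-clique on $[rk]$ is encoded by an $(r-1)$-tuple of permutations of $[k]$ avoiding a fixed $(r-1)$-tuple of length-$2$ patterns determined by $P^\ast$, and the Gunby--P\'alv\"olgyi enumeration (Theorem~\ref{thm Gunby-Palvolgyi}) bounds the number of such tuples by $c_2^{k}k^{k(r-1-1/(r-1))}$. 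Plugging $a_{\cP}(k)\le c_2^{k}k^{k(r-1-1/(r-1))}$ into Lemma~\ref{meta} with $x=r-1-\tfrac{1}{r-1}$ gives $z_{\cP}(\RM^{(r)}_n)=O(n^{1-1/r})$.
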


Most proofs, along with essential tools and preliminary observations, are presented in the next two sections. In particular, we will state an Azuma-Hoeffding-type concentration inequality for random permutations and use it to prove  Theorem~\ref{thm:random_r-partite}.    For other proofs we will use a variety of tools and techniques, including the standard first moment method, Dilworth's Lemma, estimates on pattern avoiding permutations,  and  Dyck words.  In the Appendix, we present the remaining proofs of Theorem~\ref{thm:random_two_patterns_gen}: the non-collectable case and the harmonious case (the lower bound), the latter based on a Talagrand-type strengthening of Azuma-Hoeffding.

\subsection{Open problems} Here we gather a few open problems we would like to work on in the future.

{\bf 1.} For an ordered set of $rn$ vertices, imagine a random process in which, sequentially, for each  next available vertex $v$  a set of $r-1$ vertices is selected uniformly at random to form with $v$ an $r$-element edge.  For a given set $\cP$ of $r$-patterns,   determine the order of magnitude of the size of the largest $\mathcal P$-clique after $t$ steps of this process, for any given $t=t(n)$. Reversely, given a function $k=k(n)$, determine the threshold $t(n)$, or even the hitting time, for the process to grow a $\mathcal P$-clique of size $k(n)$.

More realistically, though, we hope to be able to answer some further questions concerning the final stage of the process ($t=n$), that is, the random ordered $r$-matching $\RM^{(r)}_n$.

{\bf 2.} The first two questions relate to the limitations of the disturbing role  of non-collectable patterns.
A matching is \emph{clean} if every pair of edges forms a collectable pattern. What is the size of the largest clean sub-matching of the random $r$-matching  $\RM^{(r)}_n$? Theorem~\ref{thm:random_r-partite} provides a lower bound here, so the answer to this question should be linear in $n$ with the coefficient at least as large as $(r-1)!r^{1-r}$.
A more general question can be stated as follows.
Is it true that for every set $\mathcal P$ of collectable $r$-patterns and every set $\mathcal Q$ of non-collectable $r$-patterns, a.a.s.\
$z_{\mathcal P}(\RM^{(r)}_{n})=\Theta(z_{\mathcal P\cup\mathcal Q}(\RM^{(r)}_{n}))$ ?

{\bf 3.} With respect to Theorem~\ref{thm:random_two_patterns_gen}, we would like to extend it to \emph{all triplets} of $r$-patterns. For mutually harmonious triplets, so far we only have the result of Proposition~\ref{thm:random_three_patterns_stronger} for $r=3$: for any $3$-element subset  $\cP\subset\cR^{(3)}=\{P_6,P_7,P_8,P_9\}$,  a.a.s.\ $z_{\mathcal P}(\RM^{(r)}_{n})=\Theta(n^{2/3})$. We believe that for all other triplets of collectable 3-patterns the value of $z_{\mathcal P}(\RM^{(r)}_{n})$ should  be lower than that.
In fact, we conjecture that for every $r\ge3$, if $\cP$ consists of three pairwise non-harmonious $r$-patterns (at least one collectable), then a.a.s.\ $z_{\mathcal P}(\RM^{(r)}_{n})=\Theta(n^{1/r})$,  the same as  for single collectable patterns and mismatch pairs.

\section{Probabilistic tools} 
In this  section we present a couple of probabilistic tools and apply them right away to prove some of our results.

\subsection{ Concentration inequality for random matchings}\label{section:random}

Recall that $\RM^{(r)}_{n}$ denotes a random ordered $r$-uniform matching of size $n$, that is, a~matching picked uniformly at random out of the set $\cM_n^{(r)}$ of all
\[
\alpha^{(r)}_n:=\frac{(rn)!}{(r!)^n\, n!}
\]
ordered $r$-matchings on the set $[rn]$. For future reference note that the probability that a given $r$-element subset $e$ of $[rn]$ is an edge of $\RM^{(r)}_{n}$ is
\begin{equation}\label{1edge}
\PP(e\in \RM^{(r)}_{n})=\frac{\alpha^{(r)}_{n-1}}{\alpha^{(r)}_n}=\frac1{\binom{rn-1}{r-1}}.
\end{equation}

There is another, equivalent way of drawing $\RM^{(r)}_{n}$, the \emph{permutational scheme}.
Indeed, let $\pi$ be a permutation of $[rn]$. It can be chopped off into an $r$-matching \[M_\pi:=\left\{\{\pi(1),\dots,\pi(r)\}, \{\pi(r+1),\dots,\pi(2r)\},\dots,\{\pi(rn-r+1),\dots,\pi(rn)\}\right\}\] and, clearly, there are exactly $(r!)^n n!$ permutations $\pi$ yielding the same matching.
This means that the probability of choosing a matching with a given property, while picking it uniformly at random, is the same as when  picking a random permutation and then extracting from it an $r$-matching in the above prescribed way.

We will benefit from this straightforward observation, as it allows direct applications of
high concentration results for random permutations in the context of ordered matchings.
 One of them is the Azuma-Hoeffding inequality for random permutations (see, e.g., Lemma 11 in~\cite{FP} or  Section 3.2 in~\cite{McDiarmid98}).
 By swapping two elements in a permutation $\pi_1$ we mean fixing two indices $i<j$ and creating a new permutation $\pi_2$ with $\pi_2(i)=\pi_1(j)$, $\pi_2(j)=\pi_1(i)$, and $\pi_2(\ell)=\pi_1(\ell)$ for all $\ell\neq i,j$. Let $\Pi_{N}$ denote a permutation selected uniformly at random from all $N!$ permutations of $[N]$.

\begin{lemma}[Frieze and Pittel~\cite{FP} \textnormal{or} McDiarmid~\cite{McDiarmid98}]\label{azuma}
 Let $h(\pi)$ be a function defined on the set of all permutations of order $N$ which satisfies the following Lipschitz-type condition: there exists a constant $c>0$ such that whenever a permutation $\pi_2$ is obtained from a permutation $\pi_1$ by swapping two elements, we have $|h(\pi_1)-h(\pi_2)|\le c$.
Then, for every $\eta>0$,
\[
\PP(|h(\Pi_{N})-\E[h(\Pi_{N})]|\ge \eta)\le 2\exp\{-2\eta^2/(c^2N)\}.
\]
\end{lemma}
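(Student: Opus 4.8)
The plan is to prove this by the classical \emph{method of bounded differences}: build a Doob (exposure) martingale on the random permutation and control its increments, the only permutation-specific ingredient being a coupling between conditional probability spaces.

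First I would reveal the values $\pi(1),\pi(2),\dots,\pi(N)$ of $\Pi_N$ one coordinate at a time, letting $\mathcal F_i$ be the $\sigma$-field generated by $(\pi(1),\dots,\pi(i))$, and set $Z_i:=\E[h(\Pi_N)\mid\mathcal F_i]$, so that $(Z_i)_{i=0}^N$ is a martingale with $Z_0=\E[h(\Pi_N)]$ and $Z_N=h(\Pi_N)$. The heart of the argument is to show that, conditionally on $\mathcal F_{i-1}$, the increment $Z_i-Z_{i-1}$ lies in an interval of length at most $c$. Fix $(\pi(1),\dots,\pi(i-1))$ and two admissible values $a,a'$ for $\pi(i)$; I would exhibit a measure-preserving bijection between the permutations extending $(\pi(1),\dots,\pi(i-1),a)$ and those extending $(\pi(1),\dots,\pi(i-1),a')$, namely: given $\sigma$ with $\sigma(i)=a$, locate the unique position $j>i$ with $\sigma(j)=a'$ and swap positions $i$ and $j$. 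Since the image $\sigma'$ is obtained from $\sigma$ by a single swap, the Lipschitz hypothesis gives $|h(\sigma)-h(\sigma')|\le c$; averaging over the uniform conditional law yields
\[
\bigl|\,\E[h(\Pi_N)\mid\mathcal F_{i-1},\,\pi(i)=a]-\E[h(\Pi_N)\mid\mathcal F_{i-1},\,\pi(i)=a']\,\bigr|\le c
\]
for every admissible pair $a,a'$, so the conditional range of $Z_i$ is at most $c$, as claimed.

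Given this, the rest is the textbook Azuma--Hoeffding computation. By Hoeffding's lemma, a conditionally mean-zero random variable whose conditional range is $c$ satisfies $\E[e^{t(Z_i-Z_{i-1})}\mid\mathcal F_{i-1}]\le e^{t^2c^2/8}$, and multiplying these bounds along the martingale gives $\E[e^{t(Z_N-Z_0)}]\le e^{t^2Nc^2/8}$. Markov's inequality then yields $\PP(Z_N-Z_0\ge\eta)\le e^{-t\eta+t^2Nc^2/8}$, and the choice $t=4\eta/(Nc^2)$ optimizes this to $e^{-2\eta^2/(Nc^2)}$; applying the same reasoning to $-h$ and taking a union bound produces the stated two-sided inequality.

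I expect the main obstacle to be the coupling in the second paragraph: one must verify that the map $\sigma\mapsto\sigma'$ really is a bijection preserving the uniform measure on the relevant conditional spaces, and that changing a single revealed coordinate genuinely corresponds to exactly one swap, so that the hypothesis applies with constant $c$ rather than some multiple of it. It is also worth noting that obtaining the constant $2$ in the exponent (as opposed to the weaker $\tfrac12$ one would get from the crude estimate $|Z_i-Z_{i-1}|\le c$) requires invoking Hoeffding's lemma with the full interval \emph{length} $c$, which is exactly what the conditional-range bound above supplies.
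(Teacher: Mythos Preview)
The paper does not prove this lemma at all: it is quoted as a known result with references to \cite{FP} and \cite{McDiarmid98}, and used as a black box. Your proposed argument is precisely the standard proof one finds in those references (Doob martingale on the coordinate exposure, the single-swap coupling to bound the conditional range by $c$, then Hoeffding's lemma), so it is correct and matches the intended source.
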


We will now employ Lemma~\ref{azuma} to prove Theorem~\ref{thm:random_r-partite}. 

\begin{proof}[Theorem~\ref{thm:random_r-partite}]
For given disjoint subsets $A_1,\dots,A_r \subseteq [rn]$ we say that an $r$-edge $e$ \emph{spans} these sets if $|e\cap A_i|=1$ for each $1\le i\le r$ (see Figure\ \ref{span}).

\begin{figure}[ht]
\captionsetup[subfigure]{labelformat=empty}
\begin{center}

\scalebox{0.9}
{
\centering
\begin{tikzpicture}
[line width = .5pt,
vtx/.style={circle,draw,black,very thick,fill=black, line width = 1pt, inner sep=0pt},
]

    \node[vtx] (1) at (0,0) {};
    \node[vtx] (2) at (2,0) {};
    \node[vtx] (3) at (4,0) {};
    \coordinate (34) at (6,1) {};
    \node[vtx] (4) at (8,0) {};
    \node[vtx] (5) at (10,0) {};

    \draw[line width=0.3mm, color=lightgray]  (1) -- (5);

    \draw[line width=2.5mm, color=gray, line cap=round]  (1.5,0) -- (2.5,0);
    \draw[line width=2.5mm, color=gray, line cap=round]  (3.75,0) -- (4.75,0);
    \draw[line width=2.5mm, color=gray, line cap=round]  (7.25,0) -- (8.25,0);

    \node[color=gray] at (2.6,-0.35) {$A_1$};
    \node[color=gray] at (4.85,-0.35) {$A_2$};
    \node[color=gray] at (8.35,-0.35) {$A_3$};
    \node[color=blue] at (4,0.7) {$e$};

    \draw[line width=0.5mm, color=blue, outer sep=2mm] plot [smooth, tension=2] coordinates {(3) (34) (4)};
    \draw[line width=0.5mm, color=blue, outer sep=2mm] (3) arc (0:180:1);

    \fill[fill=black, outer sep=1mm]   (1) circle (0.1) node [below] {$1$};
    \fill[fill=black, outer sep=1mm]   (2) circle (0.1);
    \fill[fill=black, outer sep=1mm]  (3) circle (0.1);
    \fill[fill=black, outer sep=1mm]   (4) circle (0.1);
    \fill[fill=black, outer sep=1mm]   (5) circle (0.1) node [below] {$3n$};

\end{tikzpicture}
}

\end{center}

\caption{A 3-edge spanning three sets.}
\label{span}
\end{figure}
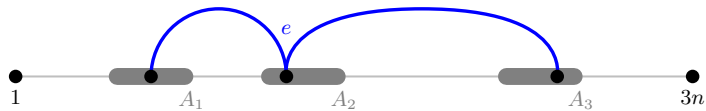

 Recall that $\cR^{(r)}$ stands for the set of all $r$-partite $r$-patterns and let $M$ be an $\cR^{(r)}$-clique on vertex set $[rn]$. Since $M$ is  $r$-partite itself, the vertices of $M$ can be partitioned into $r$ sets $A_i \subseteq [rn]$ with $|A_i|=|M|$,  $1\le i\le r$, and such that for each $i=1,\dots,r-1$, all elements of $A_i$ appear before elements of $A_{i+1}$, and each edge of $M$ spans the sets $A_1,\dots,A_r$. Clearly, there is a partition $[rn]=B_1\cup\cdots\cup B_r$ into pairwise disjoint blocks such that for each $i=1,\dots,r$ we have $A_i\subseteq B_i$.
 Consequently, in estimating the largest size of an $r$-partite sub-matching of $\RM^{(r)}_{n}$ it suffices to consider only matchings whose edges span all sets $B_1,\dots,B_r$ of such a partition.

For a given partition $\mathbf{B}$ of $[rn]$ into blocks $B_1,\dots,B_r$ define a random variable $X_{\mathbf{B}}$ that counts the number of edges of  $\RM^{(r)}_{n}$ spanning the sets $B_i$. Note that the number of choices for $\mathbf{B}$ is $O(n^{r-1})$.
 We will show that a.a.s.\ for every $\mathbf{B}$ we have $X_{\mathbf{B}} \le \frac{(r-1)!}{r^{r-1}}n(1+o(1))$ and for some $\mathbf{B}$ this bound is asymptotically achieved.

To this end, fix a partition $\mathbf{B}$ with $|B_i|=b_i$, $i=1,\dots,r$, where $\sum_{i=1}^rb_i=rn$. For each $r$-element set $e$ which spans the sets $B_1,\dots,B_r$, let $I_e=1$ if $e\in\RM^{(r)}_{n}$ and $I_e=0$ otherwise.
Then $X_{\mathbf{B}}=\sum_eI_e$ and  by~\eqref{1edge} and the linearity of expectation
\[
\E(X_{\mathbf{B}}) =\sum_e\E(I_e)
=   \frac{\prod_{i=1}^rb_i}{\binom{rn-1}{r-1}}.
\]
Note that for the equipartition $\mathbf{B^{*}}$ with all $b_i=n$, we get
\begin{equation}\label{B*}
\E(X_{\mathbf{B^{*}}})=  \frac{n^r}{\binom{rn-1}{r-1}} \sim \frac{(r-1)!}{r^{r-1}}n.
\end{equation}

It turns out that for all $\mathbf{B}$, the above quantity is an upper bound on $\E(X_{\mathbf{B}})$.
Indeed, by the inequality of arithmetic and geometric means, we get
\[
\prod_{i=1}^r b_i
\le \left( \frac{\sum_{i=1}^r b_i}{r} \right)^r
= n^r.
\]
Thus, for all $\mathbf{B}$,
\begin{equation}\label{ineq:X_B}
\E(X_{\mathbf{B}}) \le  \frac{n^r}{\binom{rn-1}{r-1}} \le (1+o(1))\frac{(r-1)!}{r^{r-1}}n.
\end{equation}

Now we are going to apply Lemma~\ref{azuma} to $X_{\mathbf{B}}$ for each $\mathbf{B}$ separately.
With respect to the Lipschitz condition the following observation is crucial for us.
Let $\pi_1$ be a permutation of $[rn]$ and let permutation $\pi_2$ be obtained from $\pi_1$ by swapping two elements. This way we can destroy (or create) at most two edges which may jointly contribute at most $2$ to $X_{\mathbf{B}}$. Hence, each random variable $X_{\mathbf{B}}$ satisfies the Lipschitz condition with $c=2$. Thus, Lemma~\ref{azuma} applied with $N=rn$, $c=2$, and $\eta=\sqrt{\omega n\log n}$ (where $\omega:=\omega(n)\to\infty$) yields that
$$\PP\left(|X_{\mathbf{B}}-\E(X_{\mathbf{B}})| \ge \sqrt{\omega n\log n}\right)\le 2n^{-\omega/2r}.$$
 This, together with~\eqref{ineq:X_B} and the union bound over all partitions $\mathbf{B}$, implies that a.a.s.\ the size of the largest $\cR^{(r)}$-clique, $\max_{\mathbf{B}}X_{\mathbf{B}}$, is at most $(1+o(1))\frac{(r-1)!}{r^{r-1}}n$. Moreover, by the same token, in view of~\eqref{B*}, this upper bound is asymptotically achieved by $X_{\mathbf{B^{*}}}$, which completes the proof of~Theorem~\ref{thm:random_r-partite}. \qed
 \end{proof}

\subsection{Generic first moment method}\label{generic}

For most  upper bound proofs in this paper we use a standard first moment method. Here we state a generic, technical ``meta-statement'' which will be applied throughout. For a set of $r$-patterns $\cP$, let $a_{\mathcal P}(k)$ denote the  number of $\mathcal P$-cliques among the matchings in $\cM_k^{(r)}$, that is, the  number of distinct $\mathcal P$-cliques one can build on a given ordered set of $rk$ vertices.

\begin{lemma}\label{meta} For $r\ge2$, let $\cP$ be a set of $r$-patterns.
If there are  constants $C>0$ and  $0\le x<r$ such that  $a_{\mathcal P}(k)\le C^{k}k^{kx}$ for all $k\ge2$, then a.a.s.\ $z_{\cP}(\RM^{(r)}_{n})=O(n^{1/(r-x)})$.
\end{lemma}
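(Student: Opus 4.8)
The plan is to apply the first moment method directly to the number of $\cP$-cliques of a prescribed size. Fix $k = k(n)$, to be chosen later, and for a given set $S \subseteq [rn]$ of $rk$ vertices let $Y_S$ be the indicator of the event that $\RM^{(r)}_n$ restricted to $S$ is an $r$-matching (of size $k$) which happens to be a $\cP$-clique. Then $Z := \sum_{S} Y_S$ counts all $\cP$-cliques of size exactly $k$ inside $\RM^{(r)}_n$, and $z_\cP(\RM^{(r)}_n) \ge k$ implies $Z \ge 1$, so by Markov's inequality it suffices to show $\E Z = \sum_S \E Y_S \to 0$ for a suitable $k = \Theta(n^{1/(r-x)})$.

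The main computation is to estimate $\E Y_S$ for a fixed $rk$-set $S$. First, the probability that $S$ is exactly covered by $k$ edges of $\RM^{(r)}_n$ is $\alpha^{(r)}_{n-k}/\alpha^{(r)}_n$, which by a short manipulation of $\alpha^{(r)}_m = (rm)!/((r!)^m m!)$ equals $1/\binom{rn}{rk}$ up to the combinatorial factor counting how $S$ may be partitioned into $k$ edges; more precisely, writing things out, $\PP(S \text{ is a union of } k \text{ edges of } \RM^{(r)}_n) = \alpha^{(r)}_k \cdot \alpha^{(r)}_{n-k}/\alpha^{(r)}_n$, and then conditioned on this, the induced matching on $S$ is a uniformly random element of $\cM^{(r)}_k$, so the conditional probability that it is a $\cP$-clique is exactly $a_\cP(k)/\alpha^{(r)}_k$. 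Multiplying, $\E Y_S = a_\cP(k)\,\alpha^{(r)}_{n-k}/\alpha^{(r)}_n$, and summing over the $\binom{rn}{rk}$ choices of $S$ gives
\[
\E Z = \binom{rn}{rk}\, a_\cP(k)\, \frac{\alpha^{(r)}_{n-k}}{\alpha^{(r)}_n}.
\]
A direct simplification shows $\binom{rn}{rk}\,\alpha^{(r)}_{n-k}/\alpha^{(r)}_n = \alpha^{(r)}_k = (rk)!/((r!)^k k!)$, so in fact $\E Z = a_\cP(k)\,\alpha^{(r)}_k / \alpha^{(r)}_k$... — wait, that collapses incorrectly; the honest identity is $\binom{rn}{rk} \alpha^{(r)}_{n-k} \alpha^{(r)}_k = \alpha^{(r)}_n \cdot \binom{n}{k}\binom{rn}{rk}^{-1}\cdots$, so one must be a little careful. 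The clean route is: $\E Z = a_\cP(k) \cdot \frac{\alpha^{(r)}_{n-k}}{\alpha^{(r)}_n}\binom{rn}{rk}$, and $\frac{\alpha^{(r)}_{n-k}}{\alpha^{(r)}_n}\binom{rn}{rk} = \frac{(rn-rk)!}{(r!)^{n-k}(n-k)!}\cdot\frac{(r!)^n n!}{(rn)!}\cdot\frac{(rn)!}{(rk)!(rn-rk)!} = \frac{(r!)^k n!}{(n-k)!\,(rk)!} = \binom{n}{k}\frac{(r!)^k k!}{(rk)!} = \binom{n}{k}/\alpha^{(r)}_k$. Hence $\E Z = \binom{n}{k} a_\cP(k)/\alpha^{(r)}_k$.

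Now I plug in the hypotheses. Using $\binom{n}{k} \le (en/k)^k$, the assumed bound $a_\cP(k) \le C^k k^{kx}$, and the standard lower bound $\alpha^{(r)}_k = (rk)!/((r!)^k k!) \ge c_r^k k^{k(r-1)}$ for some constant $c_r>0$ (which follows from Stirling: $(rk)! \approx (rk)^{rk}$, $k! \approx k^k$, giving exponent $rk - k = k(r-1)$ in $k$), we obtain
\[
\E Z \le \Big(\frac{en}{k}\Big)^k C^k k^{kx} c_r^{-k} k^{-k(r-1)} = \big(C' n\big)^k k^{-k(r-x)} = \Big(\frac{C' n}{k^{r-x}}\Big)^k
\]
for a constant $C' = C'(r,C)$. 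Choosing $k = \lceil (2C' n)^{1/(r-x)}\rceil = \Theta(n^{1/(r-x)})$ makes the base at most $1/2$, so $\E Z \le 2^{-k} \to 0$, and Markov gives a.a.s.\ $z_\cP(\RM^{(r)}_n) < k = O(n^{1/(r-x)})$, as claimed.

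The only genuinely delicate point is the bookkeeping in the identity $\E Z = \binom{n}{k} a_\cP(k)/\alpha^{(r)}_k$ — in particular getting the conditional distribution on $S$ right (that, given $S$ is a union of edges, the induced matching is uniform on $\cM^{(r)}_k$, which follows from the symmetry of the uniform measure on $\cM^{(r)}_n$) and matching up the factorials. Everything after that is routine: the crude bounds $\binom{n}{k}\le(en/k)^k$ and the Stirling estimate $\alpha^{(r)}_k \ge c_r^k k^{k(r-1)}$ are all that is needed, and the strict inequality $x < r$ is exactly what guarantees the exponent $r - x$ of $k$ is positive so that a growing $k$ drives the base below $1$.
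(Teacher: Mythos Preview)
Your argument is correct and follows essentially the same first moment route as the paper: compute $\E Z = \binom{rn}{rk}\,a_{\cP}(k)\,\alpha^{(r)}_{n-k}/\alpha^{(r)}_n$, simplify the factorials, bound by $(C'n/k^{r-x})^k$, and apply Markov with $k=\Theta(n^{1/(r-x)})$. The only cosmetic difference is that you rewrite the expectation as $\binom{n}{k}\,a_{\cP}(k)/\alpha^{(r)}_k$ whereas the paper leaves it as $\frac{(r!)^k}{(rk)!}\,a_{\cP}(k)\,\frac{n!}{(n-k)!}$; these are the same expression, and both reductions use nothing beyond Stirling and $\binom{n}{k}\le(en/k)^k$ (equivalently $n!/(n-k)!\le n^k$). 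The false start and self-correction in the middle of your write-up should simply be deleted.
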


\begin{proof}
For each $k\ge2$, let $X_k$ be the number of $\cP$-cliques of size $k$ in $\RM^{(r)}_n$.
In order to compute the expectation of $X_k$, one has to first choose a set $S$ of $rk$ vertices (out of all $rn$ vertices) on which a $\cP$-clique will be planted. Formally, for each $S\in\binom{[rn]}{rk}$ define an indicator random variable $I_S=1$ if there is a $\cP$-clique on $S$ in $\RM^{(r)}_n$, and $I_S=0$ otherwise.
Thus,
\[
\E I_S = \Pr(I_S=1)= a_{\mathcal P}(k)\cdot \frac{\alpha^{(r)}_{n-k}}{\alpha^{(r)}_n}
\]
and by the linearity of expectation, noticing that $X_k=\sum_S I_S$,
$$\E X_k=\binom{rn}{rk}a_{\cP}(k)\frac{\alpha^{(r)}_{n-k}}{\alpha^{(r)}_n}=\frac{(r!)^k}{(rk)!} \cdot a_{\cP}(k) \cdot \frac{n!}{(n-k)!}\le
\left( \frac{C' n}{k^{r-x}} \right)^k,
$$
where $C'=Cr!e^r r^{-r}$.
Let $k_0=\lceil C''n^{1/(r-x)}\rceil = \Theta(n^{1/(r-x)})$, where $C''>(C')^{1/(r-x)}$.
  Then, by  Markov's inequality,
\[
\Pr( X_{k_0}\ge1)
\le  \left( \frac{C'n}{(k_0)^{r-x}}\right)^{k_0}
= o(1).
\]
Finally, note that  $ X_{k_0}\ge1$ is equivalent to $z_{\cP}(\RM^{(r)}_{n})\ge k_0$. \qed
\end{proof}

\medskip

As a first application, we now provide a quick proof of  Theorem~\ref{thm:r=2}(c).

 \begin{proof}[Theorem~\ref{thm:r=2}(c)]
 It suffices to prove the upper bound only, since the lower bound follows by Theorem~\ref{thm:random_one_pattern} and  monotonicity.
First note that a matching is a $\cP_1$-clique if and only if it is $ABBA$-free, or nesting-free, while it is  a $\cP_2$-clique if and only if it is $ABAB$-free, or crossing-free. Both these families of matchings (with $k$ edges) have been enumerated by Stanley~\cite{StanleyCatalan} who showed that their cardinalities equal the Catalan number $C_k=\tfrac1{k+1}\binom{2k}k$. Since $C_k<4^k$, the result follows by Lemma~\ref{meta} with $C=4$ and $x=0$. \qed
\end{proof}

 As another application of Lemma~\ref{meta}, let us prove the upper bound in Theorem~\ref{thm:random_two_patterns_gen} in the case of harmonious pairs.
In doing so, we will be relying on the following operation of ``gluing'' disjoint matchings of equal size. Let $V$ be an ordered set and let $M_1=\{e_1,\dots,e_k\}$ be an $r_1$-matching on vertex set $V_1$ and  $M_2=\{f_1,\dots,f_k\}$ be an $r_2$-matching on  vertex set $V_2$, with $V_1\cup V_2\subseteq V$ and $V_1\cap V_2=\emptyset$ (the edges of the matchings are numbered in the order of their left-ends). For a permutation $\sigma:[k]\to[k]$  define the \emph{$\sigma$-product} $M_1\overset{\sigma}{\times} M_2$ of $M_1$ and $M_2$ as the $(r_1+r_2)$-matching with the edge set $\{e_1\cup f_{\sigma(1)},\dots, e_k\cup f_{\sigma(k)}\}$. This definition can be extended naturally to $(\sigma_1,\dots,\sigma_h)$-products of $h+1$ matchings
$M_1\overset{\sigma_1}{\times} M_2\overset{\sigma_2}{\times}\cdots \overset{\sigma_h}{\times} M_{h+1}$.

\begin{proof}[Theorem~\ref{thm:random_two_patterns_gen}, upper bound for harmonious pairs]

Let $\lambda_P=(\lambda_1,\dots,\lambda_s)$ be a composition $r=\lambda_1+\cdots +\lambda_s$ and let $P=S_1S_2\cdots S_s$ and $Q=S_1'S_2'\cdots S_s'$ be a pair of harmonious $r$-patterns with $\lambda_P=\lambda_Q=\lambda$, where $S_i$ and $S_i'$ are the blocks of the splitting with  $2\lambda_i=|S_i|=|S'_i|$, $i=1,\dots,s$.

Each block $S_i$ and $S_i'$ is of the form $(A)^{\lambda_i}(B)^{\lambda_i}$ or $(B)^{\lambda_i}(A)^{\lambda_i}$, the former option mandatory for $i=1$.

We are going to show that $a_{\{P,Q\}}(k)=k!$ which, by Lemma~\ref{meta} with $C=1$ and $x=1$, will imply that $z_{\{P,Q\}}(\RM^{(r)}_{n})= O\left(n^{1/(r-1}\right)$.
Let $T_0$ be the set of indices $i$ for which $S_i=S_i'$, $T_1=[s]\setminus T_0$, and let
$P_j$ be the concatenation of segments $S_i$, with $i\in T_j$, $j=0,1$. Thus, $P_0$ collects those blocks which are identical in $P$ and $Q$, while $P_1$ -- those which are pairwise different.

 Further, let $[rk]=V_1\cup\cdots\cup V_s$ be a partition of $[rk]$ into consecutive blocks, where $|V_i|=k\lambda_i$, for $i=1,\dots,s$. Finally, let $W_j:=\bigcup_{i\in T_j} V_i$, for $j=0,1$.

For every $\{P,Q\}$-clique $K$ on $[rk]$ and for each $j=0,1$, every edge $e\in K$ satisfies $|e\cap W_j|=\sum_{i\in T_j}\lambda_i=:\ell_j$.
For $j=0,1$, let $K[W_j]$ be the ordered $\ell_j$-matching on $W_j$ consisting of all edges $e\cap W_j$ with $e\in K$. Then, clearly, each $K[W_j]$, $j=0,1$, forms the unique $P_j$-clique planted on $W_j$,
and $K$ is of the form
\begin{equation}\label{KWi}
K[W_0]\overset{\sigma}{\times} K[W_1]
\end{equation}
for some  permutation $\sigma$ of $[k]$.

On the other hand, every  permutation $\sigma$ of $[k]$ defines a distinct  $\{P,Q\}$-clique given by~(\ref{KWi}).
We conclude that indeed $a_\cP(k)=k!$. \qed
\end{proof}

\section{Combinatorial tools}
In this  section we present several combinatorial tools and apply them right away to prove some of our results.

\subsection{Posets on matchings}\label{pos} It turns out, as remarked in \cite{GP2019}, that one can often define a partially ordered set (poset) on the set of edges of a random matching $\RM_n^{(r)}$ and then utilize Dilworth's Lemma, which we state here in a version most suited for us.

 \begin{lemma}[Dilworth, 1950]\label{DL}
 If the largest chain in a finite poset $X$ has size at most $a$, then the largest anti-chain in $X$ has size at least $|X|/a$.
 \end{lemma}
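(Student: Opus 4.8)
The plan is to partition $X$ into at most $a$ antichains and then invoke the pigeonhole principle. First I would define, for each $x\in X$, the integer $h(x)$ to be the maximum size of a chain in $X$ whose top (greatest) element is $x$; this is well defined since $\{x\}$ is a chain with top element $x$, so $h(x)\ge1$, and it is finite because $X$ is finite. Crucially, since by hypothesis $X$ contains no chain with more than $a$ elements, we have $h(x)\in\{1,2,\dots,a\}$ for every $x\in X$, so $h$ is a function from $X$ to $\{1,\dots,a\}$.

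The heart of the argument is the claim that each level set $X_i:=\{x\in X:\ h(x)=i\}$ is an antichain. To see this, suppose toward a contradiction that some $X_i$ contains two comparable elements, say $x<y$. Pick a chain $C\subseteq X$ of size $h(x)=i$ with top element $x$. Then $C\cup\{y\}$ is a chain of size $i+1$ whose top element is $y$ (since $x<y$ and $x$ was the top of $C$), which forces $h(y)\ge i+1$, contradicting $y\in X_i$. Symmetrically $y<x$ is impossible. Hence every $X_i$, $1\le i\le a$, is an antichain.

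Finally, the sets $X_1,\dots,X_a$ partition $X$, so by the pigeonhole principle at least one of them has size $\ge|X|/a$; that $X_i$ is an antichain of the required size, which completes the proof. I do not anticipate any genuine obstacle here: the argument is entirely elementary, and the hypotheses enter only twice — the chain condition is used to bound $h$ by $a$, and the finiteness of $X$ guarantees that $h$ is well defined. (This is precisely the easy ``dual Mirsky'' direction of the chain/antichain duality, so the full strength of Dilworth's theorem is not needed for the statement as phrased.)
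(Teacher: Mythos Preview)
Your argument is correct and complete: defining the height function $h$, observing that each level set $X_i$ is an antichain, and applying the pigeonhole principle is exactly the standard elementary proof of this statement (indeed, as you note, this is the easy direction of Mirsky's theorem and does not require the full strength of Dilworth's decomposition theorem).

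There is nothing to compare against, however: the paper does not supply a proof of Lemma~\ref{DL} at all. It is stated as a classical result attributed to Dilworth (1950) and used as a black box in the proof of Lemma~\ref{template}. Your write-up would serve perfectly well as a self-contained justification if one were desired.
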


A general template of how to proceed in such scenarios is described in the next lemma.

\begin{lemma}\label{template} Let $\cP\subseteq\cP^{(r)}$ be a set of $r$-patterns and let $N$ be a $\cP$-clique in an $r$-matching~$M$.
Suppose  that for a subset $\cQ\subset \cP$  a poset can be defined on $N$  in such a way that for all $e,f\in N$ we have $e<f$ if $\min e<\min f$ and  $e,f$ form an $r$-pattern $Q$ for some  $Q\in\cQ$.
Then,  $z_{\cQ^c}(M)\ge |N|/z_{\cQ}(M)$, where $\cQ^c=\cP\setminus\cQ$.
\end{lemma}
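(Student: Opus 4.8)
The plan is to apply Dilworth's Lemma (Lemma~\ref{DL}) directly to the poset guaranteed by the hypothesis. First I would note that the relation described on $N$ is indeed a partial order — this is part of the assumption, so I would simply take it as given that $(N,<)$ is a poset. The key observation is then to translate the two extremal quantities in Dilworth's Lemma into statements about $\cQ$-cliques and $\cQ^c$-cliques inside $M$.

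The main step is the following dictionary. A \emph{chain} in $(N,<)$ is a set of edges that are pairwise comparable; by the definition of the order, any two comparable edges $e<f$ form a pattern from $\cQ$, so a chain is precisely a $\cQ$-clique contained in $N$ (hence in $M$). Therefore the largest chain in the poset has size at most $z_{\cQ}(M)$. Dually, an \emph{anti-chain} is a set of edges that are pairwise incomparable; since $N$ is a $\cP$-clique, every pair of its edges forms a pattern in $\cP$, and a pair that is incomparable in the poset must form a pattern in $\cP\setminus\cQ=\cQ^c$ (it cannot form a pattern in $\cQ$, else it would be comparable). Thus an anti-chain in $(N,<)$ is a $\cQ^c$-clique contained in $M$, and so the largest anti-chain has size at most $z_{\cQ^c}(M)$.

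Now I would invoke Lemma~\ref{DL} with $a=z_{\cQ}(M)$: since the largest chain in $N$ has size at most $z_{\cQ}(M)$, the largest anti-chain has size at least $|N|/z_{\cQ}(M)$. Combining this with the upper bound on anti-chains from the previous paragraph gives
\[
z_{\cQ^c}(M)\ \ge\ \frac{|N|}{z_{\cQ}(M)},
\]
which is exactly the claimed inequality. (One should only make the harmless remark that $z_{\cQ}(M)\ge1$ so the division is legitimate — indeed if $|N|\ge1$ then $N$ contains a single edge, which is trivially both a $\cQ$-clique and a $\cQ^c$-clique, so both quantities are at least $1$.)

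There is essentially no obstacle here: the statement is a clean repackaging of Dilworth's Lemma once the chain/anti-chain dictionary is set up. The only point requiring a line of care is verifying that an incomparable pair genuinely yields a pattern in $\cQ^c$ rather than something outside $\cP$ — but this is immediate because $N$ is assumed to be a $\cP$-clique, so \emph{every} pair of its edges forms a pattern in $\cP$, and incomparability rules out exactly the $\cQ$-patterns.
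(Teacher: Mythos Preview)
Your proposal is correct and follows essentially the same approach as the paper: identify chains in the poset with $\cQ$-cliques and anti-chains with $\cQ^c$-cliques, then apply Dilworth's Lemma~\ref{DL} to $N$. The paper phrases the bounding step via the monotonicity $z_{\cQ}(N)\le z_{\cQ}(M)$ and $z_{\cQ^c}(N)\le z_{\cQ^c}(M)$, but your argument and theirs are the same in substance.
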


\begin{proof} Note that for such a poset, every chain is a $\cQ$-clique, while every anti-chain is a $\cQ^c$-clique. As $N\subset M$, by monotonicity, $z_{\cQ}(N)\le z_{\cQ}(M)$ and $z_{\cQ^c}(N)\le z_{\cQ^c}(M)$.  By Lemma~\ref{DL} applied to $N$ we thus conclude that
\[
z_{\cQ^c}(M)\ge z_{\cQ^c}(N)\ge |N|/z_{\cQ}(N)\ge |N|/z_{\cQ}(M). \eqno\qed
\]
\end{proof}

  In this paper we benefit from Lemma~\ref{template} only in the case of $r$-partite patterns. We now deduce the lower bound in Proposition~\ref{thm:random_three_patterns_stronger} from the upper bound in Theorem~\ref{thm:random_one_pattern}.

  \begin{proof}[Proposition~\ref{thm:random_three_patterns_stronger}, lower bound]
  Recall that $R^{(r)}_n$ is the largest (lexicographically first) $r$-partite sub-matching of $\RM^{(r)}_n$.
   Let $P=S_1\cdots S_r$, where $|S_i|=2$, $i=1,\dots,r$, be the only $r$-partite pattern not belonging to $\cP$. Further, let $I_1\subseteq [r]$ consist of all those indices $i$ for which $S_i=AB$, and let $I_2=[r]\setminus I_1$.  Given two edges $e,f\in R^{(r)}_n$, let
 \begin{equation*}
e=(x_1<\cdots <x_r)<f=(y_1<\cdots <y_r)\quad\mbox{if}\quad\begin{cases} x_i<y_i\quad\mbox{for all}\quad i\in I_1
\\x_i>y_i\quad\mbox{for all}\quad i\in I_2.\end{cases}
\end{equation*}
In this poset, a chain is just a $P$-clique, while an anti-chain is an $\left(\cR^{(r)}\setminus\{P\}\right)$-clique.   Since by Theorem~\ref{thm:random_one_pattern} a.a.s.\ $z_{P}(\RM^{(r)}_{n})=\Theta(n^{1/r})$, we are in position to apply Lemma~\ref{template} with $\cQ:=\{P\}$, $M:=\RM^{(r)}_{n}$ and $N:=R^{(r)}_n$, and conclude that a.a.s.\
$z_{\cP}(\RM^{(r)}_{n})=\Omega(n^{(r-1)/r})$.~\qed
\end{proof}

\subsection{Pattern avoiding permutations}

For the  proof of the upper bound in Proposition~\ref{thm:random_three_patterns_stronger} we will need a result of Gunby and P\'{a}lv\"{o}lgyi from \cite{GP2019} concerning parallel pattern avoidance by permutations.

For integers $k\ge m$, let $\sigma$ be a permutation of $[k]$ and let $\tau$ be a permutation of $[m]$. We say that $\sigma$ \emph{avoids} $\tau$ if there is no sequence $a_1<\dots <a_m$ such that the sub-permutation  $\sigma(a_1),\dots, \sigma(a_m)$ is order-isomorphic to permutation $\tau$. This notion can be generalized in a natural way to sequences of permutations. Let $\sigma=(\sigma_1,\dots,\sigma_d)$ be a $d$-tuple of permutations of $[k]$ and let $\tau=(\tau_1,\dots,\tau_d)$ be a $d$-tuple of permutations of $[m]$. We say that $\sigma$ \emph{avoids} $\tau$ if there is no sequence $a_1<\cdots <a_m$ such that all $d$  sub-permutations $\sigma_i(a_1),\dots, \sigma_i(a_m)$ are order-isomorphic to corresponding permutations $\tau_i$ for $i=1,\dots,d$.

Generalizing the celebrated result by Marcus and Tardos \cite{MT}, Gunby and P\'{a}lv\"{o}lgyi proved the following estimates.
\begin{theorem}[Gunby and P\'{a}lv\"{o}lgyi~\cite{GP2019}]\label{thm Gunby-Palvolgyi}
	Let $\tau$ be a fixed $d$-tuple of permutations of $[m]$. Let $f_d(k)$ denote the number of $d$-tuples of permutations of $[k]$ avoiding $\tau$. Then there exist positive constants $c_1$ and $c_2$ (depending on $\tau$) such that
	\[
	c_1^k\cdot k^{k(d-1/d)}\leqslant f_d(k)\leqslant c_2^k\cdot k^{k(d-1/d)}.
	\]
\end{theorem}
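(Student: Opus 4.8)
\textbf{Proof proposal for Theorem~\ref{thm Gunby-Palvolgyi}.}

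The plan is to prove the two-sided estimate $c_1^k k^{k(d-1/d)} \le f_d(k) \le c_2^k k^{k(d-1/d)}$ by combining the Marcus--Tardos theorem on pattern-avoiding $0$-$1$ matrices (applied in a product / $d$-layer form) with a counting argument based on iterated partitioning of the index set $[k]$ into blocks. The upper bound is the substantive direction and will be carried out first; the lower bound is a construction.

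\textbf{Upper bound.} First I would encode a $d$-tuple $\sigma=(\sigma_1,\dots,\sigma_d)$ of permutations of $[k]$ as a $d$-tuple of $k\times k$ permutation matrices $(P_1,\dots,P_d)$, equivalently as a single $k\times k$ matrix over the alphabet $\{0,1,\dots,d\text{-tuples}\}$ whose $(i,j)$ entry records which of the $\sigma_\ell$ send $i\mapsto j$ — but more usefully as the "overlay" whose $(i,j)$ cell is the subset $\{\ell : \sigma_\ell(i)=j\}$. Avoiding the fixed tuple $\tau=(\tau_1,\dots,\tau_d)$ of permutations of $[m]$ translates into: the overlay matrix avoids the corresponding $m\times m$ pattern matrix $T$ (a matrix over subsets of $[d]$). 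A Marcus--Tardos--type theorem for such "coloured" or layered pattern matrices — which is exactly the generalization Gunby and P\'alv\"olgyi establish, and which I would cite / reprove in the linear-in-$n$ form — gives a constant $C=C(\tau)$ such that any $N\times N$ such matrix avoiding $T$ has at most $CN$ nonempty cells in each layer, hence $O(N)$ nonempty cells total. The second step is the standard recursive counting that converts the linear cell bound into the $k^{k(d-1/d)}$ growth rate: partition $[k]$ into $k^{1/d}$ consecutive blocks of size $k^{1-1/d}$ on the row side and likewise on the column side, giving a $k^{1/d}\times k^{1/d}$ coarse grid; the coarse overlay still avoids $T$ (a subpattern of a subpattern is a subpattern), so only $O(k^{1/d})$ of its $k^{2/d}$ coarse cells are nonempty; there are at most $(k^{1/d})^{k^{1/d}}\cdot \binom{k^{2/d}}{O(k^{1/d})}$ ways to choose the coarse structure, and then within each nonempty coarse cell we have a $d$-tuple of partial injections on a block of size $k^{1-1/d}$, which (since each $\sigma_\ell$ is a bijection the row-blocks each "send" exactly $k^{1-1/d}$ entries, so the nonempty coarse cells in each row-block of the coarse grid contain, in total over one layer, exactly $k^{1-1/d}$ entries) recurses. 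Writing $g(k)$ for the resulting recursion, one gets $g(k)\le (c k)^{k^{1/d}} \cdot g(k^{1-1/d})^{O(k^{1/d})}$ roughly, and unwinding this recursion — this is the routine but slightly delicate calculation I would not grind through here — yields $f_d(k)\le c_2^k k^{k(d-1/d)}$. The exponent $k(d-1/d)$ is exactly what drops out: a single permutation of $[k]$ needs $k\log k$ bits, $d$ of them would naively need $dk\log k$, and the pattern-avoidance constraint shaves off a $(1/d)k\log k$ term through the $O(k^{1/d})$-not-$O(k^{2/d})$ savings at each of the $\log\log$-many recursion levels, compounding to the stated $-k\log k / d$ correction.

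\textbf{Lower bound.} Here I would exhibit $c_1^k k^{k(d-1/d)}$ avoiders explicitly. Take $[k]$ and chop it into $t:=\lceil k^{1/d}\rceil$ consecutive intervals $B_1,\dots,B_t$, each of size about $k^{1-1/d}$. Build each $\sigma_\ell$ so that it maps $B_i$ into a prescribed block $B_{\rho_\ell(i)}$ for a suitable fixed family of block-permutations $\rho_1,\dots,\rho_d$ chosen so that the coarse $t\times t$ overlay avoids $T$ regardless of what happens inside the blocks (e.g. make all layers agree on a single increasing block-diagonal pattern, or use that $T$ is a fixed finite pattern so any sufficiently "spread out" coarse arrangement avoids it). Then inside each block $B_i$ the $d$ restricted permutations can be chosen completely freely and independently, giving $((k^{1-1/d})!)^{dt}$ choices; by Stirling this is $\exp\big(d\cdot k^{1/d}\cdot k^{1-1/d}(1-1/d)\log k + O(k)\big) = \exp\big(k(d-1)\log k + O(k)\big)\cdot(\text{extra})$ — and iterating the construction one more level inside each block (recursing the same way) boosts the exponent from $k(d-1)\log k$ up to the required $k(d-1/d)\log k$. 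I would present this as: let $L(k)$ be the number of avoiders produced by the construction, show $L(k)\ge ((k^{1-1/d})!)^{dt}\cdot L(k^{1-1/d})^{\text{(appropriate count)}}$, and solve the recursion to get $L(k)\ge c_1^k k^{k(d-1/d)}$, matching the upper bound.

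\textbf{Main obstacle.} The hard part is the Marcus--Tardos step for $d$-layered (coloured) pattern matrices together with making the block-recursion bookkeeping honest: one must track not just the number of nonempty coarse cells but the fact that the $k^{1-1/d}$ entries of a given layer inside a given row-block are distributed among its nonempty coarse cells in a way that still allows a clean recursion, and one must verify that passing to the coarse overlay genuinely preserves $\tau$-avoidance in the layered sense (a joint pattern, not $d$ separate ones). Modulo the layered Marcus--Tardos bound — which is the genuinely new ingredient Gunby and P\'alv\"olgyi supply and which I would invoke as their contribution rather than reprove in full — the rest is a disciplined but standard divide-and-conquer estimate.
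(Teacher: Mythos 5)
First, a point of context: the paper offers no proof of Theorem~\ref{thm Gunby-Palvolgyi} --- it is imported verbatim from Gunby and P\'alv\"olgyi~\cite{GP2019} and used as a black box --- so there is no in-paper argument to compare yours against; the only meaningful comparison is with the source. Judged on its own terms, your proposal has a genuine gap in the lower bound. Your construction partitions $[k]$ into $t=k^{1/d}$ blocks of size $s=k^{1-1/d}$, fixes coarse block-permutations $\rho_1,\dots,\rho_d$, and lets the $d$ restrictions to each block be free. This produces at most $(t!)^d\cdot(s!)^{dt}=\exp\bigl((d-1)k\log k+o(k\log k)\bigr)$ tuples, while the target is $\exp\bigl((d-\tfrac1d)k\log k+O(k)\bigr)$; since $d-1<d-\tfrac1d$ for $d\ge2$, you fall short by a factor of roughly $k^{k(1-1/d)}$. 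Your proposed fix --- ``iterating the construction one more level inside each block'' --- cannot close this gap: recursing inside a block means choosing each interior $d$-tuple from a \emph{subfamily} of the $(s!)^d$ free choices, so the count can only decrease (writing $\log L(k)=\beta(k)\,k\log k$, the recursion $L(k)\ge N(t)\,L(s)^{t}$ forces $\beta(k)\le(1-\tfrac1d)\beta(s)+o(1)$, so the exponent coefficient shrinks at every level). Worse, the premise that a coarse structure can ``avoid $T$ regardless of what happens inside the blocks'' is false: the $d$ free restrictions to a single block form an arbitrary $d$-tuple of permutations of $[s]$, which for $s$ large already contains a copy of $\tau$ entirely inside that block. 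A genuinely different construction, treating the $d$ layers asymmetrically rather than imposing one common block structure with free interiors, is required to reach $k^{k(d-1/d)}$.

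The upper bound, as written, is not a proof either: its two essential ingredients --- the linear F\"uredi--Hajnal/Marcus--Tardos-type bound for the layered pattern matrix and the recursion that converts it into the $k^{-k/d}$ saving --- are precisely the content of \cite{GP2019}, and you explicitly propose to invoke the former ``as their contribution rather than reprove in full.'' That makes the argument circular as a derivation of the cited theorem, and the recursion bookkeeping you defer (how the $k^{1-1/d}$ entries of each layer distribute among the nonempty coarse cells of a row-block) is exactly where the exponent $d-\tfrac1d$ has to be earned.
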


  \begin{proof}[Proposition~\ref{thm:random_three_patterns_stronger}, upper bound]
  Let $P=S_1\cdots S_r$, where $|S_i|=2$, $i=1,\dots,r$. Further, let $I_1\subseteq [r]$ consist of all those indices $i$ for which $S_i=AB$.
  Let $\cP=\cR^{(r)}\setminus\{P\}$ consist of all $r$-partite patterns but $P$.
  Further, let $[rk]=V_1\cup\cdots\cup V_r$ be a partition of $[rk]$ into consecutive blocks, where $|V_i|=k$, for $i=1,\dots,s$.
  If we were after $\cR^{(r)}$-cliques, we would just take a $(\sigma_1,\dots,\sigma_{r-1})$-product
 $$K[V_1]\overset{\sigma_1}{\times}K[V_2]\overset{\sigma_2}{\times}\cdots\overset{\sigma_{r-1}}{\times} K[V_r].$$
  However, to avoid the forbidden pattern $P$,  the $(r-1)$-tuple of permutations $(\sigma_1,\dots,\sigma_{r-1})$ should collectively avoid the $(r-1)$-tuple of permutations\linebreak $(\tau_1,\dots,\tau_{r-1})$ of $[12]$ where $\tau_i=12$ if $i+1\in I_1$ and $\tau_i=21$ otherwise, $i=1,\dots,r-1$.

Thus, by Theorem~\ref{thm Gunby-Palvolgyi} with $d=r-1$, we have $a_{\cP}(k)\le c_2^kk^{(r-1-1/(r-1))k}$. Consequently, by  Lemma~\ref{meta} with $C=c_2$ and $x=r-1-1/(r-1)$, we get $z_{\cP}(\RM^{(r)}_{n})= O\left(n^{1-1/r}\right)$, which completes the proof. \qed
\end{proof}

Let us mention that the same bound on $a_{\cP}(k)$ was proved in \cite[Theorem 1.13]{AJKS}  by a different method (not relying on
Theorem~\ref{thm Gunby-Palvolgyi}).

\subsection{Traces of matchings}\label{trace}
In some proofs of upper bounds, in order to estimate the number $a_{\cP}(k)$ of distinct $\cP$-cliques of size $k$ appearing in Lemma~\ref{meta}, we employ the concept of the trace of a matching, introduced in \cite{DGR-socks}. 

Let $M$ be an ordered $r$-matching of size $n$. We  assign number $i$ to the $i$-th (from the left) vertex of each edge of $M$. The obtained $r$-ary sequence (with alphabet $[r]$) will be called the \emph{trace of $M$} and denoted by $\tr(M)$. For instance,
for $M=AABCBDBDACCD$, we have $\tr(M)=121121323233$. In general, there may be many matchings with the same  trace, e.g.,  $M'=AABCCDADDBBC$ has $\tr(M')=\tr(M)$.
 However, for some restricted families of matchings $\cF$ it may happen that no two members of $\cF$ have the same trace. Then, knowing the trace of a matching $M\in\cF$, one is able to uniquely reconstruct $M$. In that case we call the family $\cF$ \emph{reconstructible}.
Using this concept, we obtain the following upper bound on the size of the largest $\cP$-clique in $\RM_n^{(r)}$ for a class of sets $\cP$.


\begin{observation}\label{recon}
Let $\cP$ be a set of $r$-patterns such that the family of all $\cP$-cliques is reconstructible. Then $z_{\cP}(\RM_n^{(r)})=O(n^{1/r})$.
\end{observation}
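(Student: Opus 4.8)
The key observation is that the number of distinct traces of matchings in $\cM_k^{(r)}$ is small: a trace is an $r$-ary sequence of length $rk$ in which each of the $r$ symbols $1,2,\dots,r$ occurs exactly $k$ times (it records, for each edge, which of its vertices is leftmost, second, etc.). Hence the number of such sequences is the multinomial coefficient
\[
\binom{rk}{k,k,\dots,k}=\frac{(rk)!}{(k!)^r}\le r^{rk},
\]
the last bound being immediate since each of the $rk$ positions receives one of $r$ symbols. Therefore, if the family of all $\cP$-cliques (of size $k$) is reconstructible, the map $K\mapsto\tr(K)$ is injective on that family, so the number $a_\cP(k)$ of $\cP$-cliques on a fixed ordered set of $rk$ vertices satisfies $a_\cP(k)\le r^{rk}$.

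Now I would simply invoke Lemma~\ref{meta}. Writing $r^{rk}=(r^r)^k=C^k k^{kx}$ with $C=r^r$ and $x=0$, the hypothesis of Lemma~\ref{meta} is met (note $0\le x=0<r$), and the lemma yields that a.a.s.\ $z_\cP(\RM_n^{(r)})=O(n^{1/(r-x)})=O(n^{1/r})$, which is exactly the claimed bound.

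The only slightly delicate point is making sure the injectivity/reconstructibility is applied at the right level of generality: reconstructibility of ``the family of all $\cP$-cliques'' means that within the set of $\cP$-cliques no two (of the same size, living on the same vertex set) share a trace, and this is precisely what lets us bound $a_\cP(k)$ by the number of available $r$-ary trace sequences. There is no genuine obstacle here — the statement is essentially a one-line corollary of Lemma~\ref{meta} once the counting of traces is in place — so the ``hard part'' is merely recording the multinomial count and the trivial estimate $\binom{rk}{k,\dots,k}\le r^{rk}$ cleanly. One could get the sharper constant $C=r^r/\sqrt{\text{poly}}$ from Stirling, but since Lemma~\ref{meta} only needs \emph{some} constant $C$ and the exponent $x=0$, the crude bound suffices.
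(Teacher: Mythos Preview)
Your proof is correct and essentially identical to the paper's own argument: bound $a_{\cP}(k)$ by the number of $r$-ary words of length $rk$, i.e.\ $r^{rk}$, and apply Lemma~\ref{meta} with $C=r^r$ and $x=0$. The multinomial refinement you mention is unnecessary but harmless.
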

\begin{proof}
By assumption,  there are no more $\cP$-cliques on a vertex set of size $rk$ than there are words of length $rk$ over alphabet $[r]$. So, $a_{\cP}(k)\leqslant r^{rk}$ and the conclusion follows by Lemma \ref{meta} with $x=0$ and $C=r^r$. \qed
\end{proof}

\begin{remark} In fact, the traces of ordered matchings have a special structure: for every pair $i<j$, in every prefix of the trace $\tr(M)$ the number of occurrences of $i$ is not smaller than the number of occurrences of $j$. For instance, for $r=2$ we obtain the well known family of $\emph{Dyck words}$ enumerated by \emph{Catalan numbers}. For arbitrary $r\geqslant 2$ one gets families of \emph{$r$-dimensional Dyck words} enumerated jointly in  table A060854 of the OEIS database (cf.\ \cite{MMoo}).
  Due to the robustness of our estimates, this is, however, irrelevant for us.
\end{remark}

As an application of Observation~\ref{recon}, we will prove  Theorem~\ref{thm:random_two_patterns_gen} for mismatched pairs.
But  first we make an observation about the inheritance  of the property of being mismatched.
For an $r$-pattern $P$ and $1\le j\le r$, let $P^{-j}$ denote the $(r-1)$-pattern obtained  by removing the $j$-th letter $A$ and the $j$-th letter $B$ from $P$. For instance, if $P=AABBBABA$, then $P^{-1}=P^{-2}=ABBABA$, while $P^{-3}=AABBBA$. Clearly, if $P$ is collectable, then so is $P^{-j}$ for every $j$, but the opposite does not necessarily hold.

\begin{observation}\label{obs:1}
For every $r\geq 3$ and any mismatched pair $\{P,Q\}$, there exists $1\le j\le r$ such that $\{P^{-j},Q^{-j}\}$ is a mismatch, too.
\end{observation}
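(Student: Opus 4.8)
The plan is to produce a suitable $j$ by hand, splitting according to the three ways $\{P,Q\}$ can be a mismatch. I would begin by recording two facts about the column‑deletion operation $R\mapsto R^{-j}$. First, its action on collectable patterns: if $P=S_1\cdots S_s$ is collectable with $\lambda_P=(\lambda_1,\dots,\lambda_s)$, then, since each block $S_i$ contains exactly $\lambda_i$ copies of $A$ and of $B$, the $j$-th letter $A$ and the $j$-th letter $B$ of $P$ lie in the \emph{same} block $S_i$ — the one with $\lambda_1+\dots+\lambda_{i-1}<j\le\lambda_1+\dots+\lambda_i$. Hence $P^{-j}$ is again collectable, and $\lambda_{P^{-j}}$ is obtained from $\lambda_P$ by lowering the $i$-th part by $1$ (deleting it if it reaches $0$). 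Writing $B(\mu)\subseteq\{1,\dots,r-1\}$ for the set of proper partial sums of a composition $\mu$ of $r$ — a bijection onto subsets of $\{1,\dots,r-1\}$ — the two boundary cases become
\[
B(\lambda_{P^{-1}})=\{b-1:\ b\in B(\lambda_P)\}\setminus\{0\},\qquad B(\lambda_{P^{-r}})=B(\lambda_P)\setminus\{r-1\}.
\]
Second, a greedy description of collectability: the (unique) block splitting of a canonically written pattern, when it exists, is obtained by repeatedly stripping from the left the block $A^uB^u$ or $B^uA^u$, where $u$ is the length of the current leading run; a pattern is non‑collectable precisely when this jams, i.e.\ reaches a word whose leading run, of length $u$ in one letter, is immediately followed by a strictly shorter run of the other letter.

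If $P$ and $Q$ are both collectable but not harmonious, then $\lambda_P\ne\lambda_Q$, so $B(\lambda_P)\triangle B(\lambda_Q)$ contains some $t$. If $t\ge2$ the first displayed formula gives $t-1\in B(\lambda_{P^{-1}})\triangle B(\lambda_{Q^{-1}})$, so $\{P^{-1},Q^{-1}\}$ is a mismatch and $j=1$ works; if $t\le r-2$ the second formula gives $t\in B(\lambda_{P^{-r}})\triangle B(\lambda_{Q^{-r}})$, so $j=r$ works. Since $r\ge3$, one of the alternatives $t\ge2$, $t\le r-2$ always holds.

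If instead exactly one of $P,Q$ is collectable, say $P$ is and $Q$ is not, then $P^{-j}$ is collectable for every $j$, so it suffices to find $j$ with $Q^{-j}$ non‑collectable. I would run the greedy test on $Q$ and let $Q''$ be the word at which it jams, so $Q''$ starts with $c^a\bar c^b\cdots$ with $b<a$ and $a\ge2$. If at least one block was stripped first, write $Q=Q_0Q''$ with $Q_0$ a nonempty string of blocks; deleting the first $A$ and the first $B$ of $Q$ alters only $Q_0$ (keeping it a string of blocks, by the first fact applied to $Q_0$) and leaves $Q''$ intact, so $Q^{-1}$ still jams and $j=1$ works. Otherwise $Q=A^aB^bA^{u_2}\cdots$ jams at once, and I would hunt for a column whose removal still leaves a leading $A$-run of length $\ge2$ followed by a strictly shorter $B$-run: delete inside the second $A$-run when it has length $\ge2$, inside a third $A$-run when one exists, and for the residual shapes $Q=A^{r-1}B^bAB^{r-b}$ take $j=1$ if $b\ge2$ and $j=2$ if $b=1$. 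The single case this misses is $Q=AABABB$ with $r=3$: there $(AABABB)^{-1}=(AABABB)^{-3}=AABB$ and $(AABABB)^{-2}=ABAB$, with compositions $(2),(1,1),(2)$; and by the deletion rule no collectable $3$-pattern $P$ can have $\lambda_{P^{-1}}=\lambda_{P^{-3}}=(2)$ (which forces $B(\lambda_P)=\varnothing$, i.e.\ $P=AAABBB$) together with $\lambda_{P^{-2}}=(1,1)$ (false for $AAABBB$), so some $j\in\{1,2,3\}$ makes $\lambda_{P^{-j}}\ne\lambda_{Q^{-j}}$, a mismatch.

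The routine parts are the ``both collectable'' case and the ``some block stripped'' subcase. The delicate point — the main obstacle — is the ``jams immediately'' subcase: one must verify that \emph{some} single‑column deletion preserves non‑collectability of $Q$, which is an elementary but slightly fussy distinction on the first few run‑lengths of $Q$, plus the lone hand computation for $Q=AABABB$ (the unique non‑collectable $3$-pattern), precisely the shape for which \emph{every} single‑column deletion restores collectability and one must instead compare compositions.
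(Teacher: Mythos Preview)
Your proposal is correct and follows the same two-case strategy as the paper: pick $j\in\{1,r\}$ when both patterns are collectable, and otherwise (for $r\ge4$) pick $j$ so that $Q^{-j}$ stays non-collectable, handling $r=3$ separately. Your boundary-set encoding $B(\mu)$ streamlines the both-collectable case and the $r=3$ check compared with the paper's ad hoc inspection, while the paper's case split in the non-collectable subcase (on the first two run-lengths $q,t$ rather than on $u_2$ and the presence of a third $A$-run) is slightly more explicit about which $j$ is meant.
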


\begin{proof}[Theorem~\ref{thm:random_two_patterns_gen}, mismatched pairs]
It suffices to prove the upper bound only, since the lower bound follows by Theorem~\ref{thm:random_one_pattern} and  monotonicity.

Let $\{P,Q\}$ be a mismatched pair of $r$-patterns.
In view of Observation~\ref{recon}, it is enough to demonstrate that the family of $\{P,Q\}$-cliques is reconstructible.
We proceed by induction on $r$.

For $r=2$, recall that there are just two mismatched pairs,\linebreak  $\mathcal P_1=\{AABB,\;ABAB\}$ and $\mathcal P_2=\{AABB,\;ABBA\}$. The bijections between nesting-free and crossing-free matchings on one side and the Dyck binary words on the other, used by Stanley \cite{StanleyCatalan} to enumerate both families of matchings (see the proof of Theorem~\ref{thm:r=2}(c) in Section~\ref{generic}), show that both, $\mathcal P_1$-cliques and $\mathcal P_2$-cliques, are indeed reconstructible.

Assume now that $r\geq 3$ and that we have proved the reconstructibility for $r-1$.
 Let $M$ and $N$ be two different $\{P,Q\}$-cliques of size $k$ on vertex set~$V$. We are going to show that $\tr(M)\neq \tr(N)$. To this end, let  $j$ be given by Observation~\ref{obs:1}. After removing the $j$-th letter from each edge of $M$ and $N$, we obtain $\{P^{-j},Q^{-j}\}$-cliques $M^{-j}$ and $N^{-j}$, where  the pair $\{P^{-j},Q^{-j}\}$ is still a mismatch.
 If $V(M^{-j})\neq V(N^{-j})$, then $\tr(M)\neq \tr(N)$ as the digit $j$ appears in $\tr(M)$ and $\tr(N)$ on different positions.

 Assume then that
 $V(M^{-j})= V(N^{-j})=V\setminus\{v_1,\dots,v_k\}$. We are going to show that $M^{-j}\neq N^{-j}$ and then apply induction. Suppose that $M^{-j}= N^{-j}$.
  Let $e_1,\dots, e_k$ be the edges of $M$ and $e'_1,\dots, e'_k$ be the corresponding edges of $M^{-j}$. We may assume that $e_i=e_i'\cup\{v_i\}$, $i=1,\dots,k$. Then $N=\{e_i'\cup\{v_{\alpha(i)}\}: i=1,\dots,k\}$ where $\alpha$ is a permutation of $[k]$. Since $M\neq N$, $\alpha$ is not an identity and so there is an inversion in $\alpha$: a pair of indices $i<\ell$ such that $\alpha(i)>\alpha(\ell)$. W.l.o.g, let the pair $e_i,e_\ell$ form the $r$-pattern $P$, so $e_i',e_\ell'$ form $P^{-j}$. However, due to the inversion, the edges $e_i'\cup\{v_{\alpha(i)}\}$ and $e_\ell'\cup\{v_{\alpha(\ell)}\}$ must form an $r$-pattern different from $P$ (as the order of the $j$-th $A$ and the $j$-th $B$ is reversed when compared to the pair $e_i,e_\ell$). So, they must form $Q$. This, however, contradicts the fact that $Q^{-j}\neq P^{-j}$.

 We thus proved that $M^{-j}\neq N^{-j}$, and consequently, by the induction hypothesis, $\tr(M^{-j})\neq \tr(N^{-j})$. Putting back digit $j$ to the original set of $k$ positions (the same for $M$ and for $N$), we infer that $\tr(M)\neq \tr(N)$. This concludes the proof of reconstructability of the family of $\{P,Q\}$-cliques, and thus the proof of Theorem~\ref{thm:random_two_patterns_gen} for mismatched pairs $P,Q$ is completed. \qed
\end{proof}



\section*{Appendix}

 Here we present two remaining proofs of the lower bounds in Theorem~\ref{thm:random_two_patterns_gen}, namely for the cases when $\{P,Q\}$ is a pair of non-collectable patterns, as well as, when it is a harmonious pair. In the former case we give a proof of a slightly more general  result, while in the latter, conversely, we present the proof in a special, but typical case. Both proofs demonstrate a common proof technique of ``implanting'' the desired structure within the vertex set of a random matching and then showing that it will actually appear with high probability.

\subsection*{Pairs of non-collectable patterns}\label{non-collect}

For  non-collectable $r$-patterns $P$ and $Q$, the (a.a.s.) upper bound  $z_{\{P,Q\}}(\RM^{(r)}_{n})=O(1)$ in Theorem~\ref{thm:random_two_patterns_gen} follows easily by a Ramsey type argument. In fact, a stronger, fully deterministic,  statement holds. Let $\cP$ be a set of  non-collectable $r$-patterns  of size $|\cP|=m\ge2$. Then the largest $\cP$-clique has size smaller than the multicolor Ramsey number $R_{m}(3)$. Indeed, let $M$ be a $\cP$-clique with $k$ edges and consider the complete graph $K_k$ whose vertices are the edges of $M$ whereas each edge of $K_k$ is colored with the pattern formed by its ends. 
By \cite[Prop. 2.1]{JCTB_paper}, there is no monochromatic triangle in this $m$-colored clique, so $k<R_{m}(3)$ by the definition of Ramsey numbers. In particular, for $m=2$ we get the bound $k\le5$. It follows that, defining
$$z_{\cP}=\max\{k:\exists\;\;\mbox{$\cP$-clique of size $k$}\},$$
we have $z_{\cP}(\RM^{(r)}_{n})\le z_{\cP}<R_{m}(3)$.
The fact that a.a.s.\ $z_{\cP}(\RM^{(r)}_{n})\ge z_{\cP}$  stems from the following  more general result, applied to $H$ being the largest $\cP$-clique possible.

\begin{proposition}\label{fixH}
Let $H$ be a fixed  ordered $r$-matching.  Then, a.a.s., there is a copy of $H$ in~$\RM^{(r)}_{n}$.
\end{proposition}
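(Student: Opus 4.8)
The plan is to use the first--moment (expectation) method together with a second--moment concentration argument. Let $h$ denote the number of edges of $H$, so $H$ is an ordered $r$-matching on $[rh]$. For a set $S\in\binom{[rn]}{rh}$ of $rh$ vertices, let $I_S$ be the indicator of the event that the sub-matching of $\RM^{(r)}_n$ induced on $S$ is order-isomorphic to $H$ (in particular, that $S$ is exactly covered by $h$ edges of $\RM^{(r)}_n$). First I would compute $\E I_S$ exactly using the formula \eqref{1edge} and the count $\alpha^{(r)}_n=\tfrac{(rn)!}{(r!)^n n!}$: since $H$ corresponds to a unique matching on each fixed $rh$-element ordered set, $\Pr(I_S=1)=\alpha^{(r)}_{n-h}/\alpha^{(r)}_n$, independent of $S$. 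Writing $X=\sum_S I_S$ for the number of copies of $H$, the linearity of expectation gives
\[
\E X=\binom{rn}{rh}\cdot\frac{\alpha^{(r)}_{n-h}}{\alpha^{(r)}_n}=\frac{(r!)^h}{(rh)!}\cdot\frac{n!}{(n-h)!}\sim\frac{(r!)^h}{(rh)!}\,n^h\to\infty,
\]
so in expectation there are plenty of copies. (Here $h$ is a constant, so the asymptotics in $n$ are immediate.)

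Next I would upgrade this to an ``a.a.s.'' statement via Chebyshev's inequality, for which I need $\mathrm{Var}(X)=o\big((\E X)^2\big)$, equivalently $\E X^2=(1+o(1))(\E X)^2$. Expanding $\E X^2=\sum_{S,T}\Pr(I_S=1,I_T=1)$, I would split the sum according to $|S\cap T|$. The diagonal-type terms where $S$ and $T$ overlap contribute a lower-order amount: for each fixed overlap size $0<j\le rh$ (or more precisely, each fixed way the copies share edges/vertices), the number of pairs $(S,T)$ is $O(n^{2h-j})$ (roughly; one loses at least one free vertex, hence at least a factor $n^{-1}$, for each unit of genuine overlap that is forced to be consistent), while $\Pr(I_S=1,I_T=1)=O(n^{-(2h-\text{something})})$ in a matching way, so the whole off-diagonal contribution is $O((\E X)^2/n)=o((\E X)^2)$. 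For disjoint $S,T$ (the dominant term), one checks $\Pr(I_S=1,I_T=1)=(1+O(1/n))\Pr(I_S=1)\Pr(I_T=1)$ using the explicit ratios of the $\alpha^{(r)}$'s; summing over the $\binom{rn}{rh}\binom{rn-rh}{rh}=(1+o(1))\binom{rn}{rh}^2$ disjoint pairs recovers $(1+o(1))(\E X)^2$. Therefore $\Pr(X=0)\le\mathrm{Var}(X)/(\E X)^2\to0$, i.e.\ a.a.s.\ there is a copy of $H$.

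Alternatively, and perhaps more cleanly, one can avoid the variance computation by using the permutational scheme together with the Azuma--Hoeffding inequality from Lemma~\ref{azuma}: define $Y=Y(\pi)$ to be the number of copies of $H$ among the $h$-subsets of consecutive blocks of $M_\pi$ of a fixed type (restricting, say, to copies sitting in a prescribed pattern of block positions so that swapping two entries of $\pi$ changes $Y$ by $O(1)$); then $\E Y\to\infty$ and the Lipschitz bound gives $\Pr(Y=0)\le\Pr\big(|Y-\E Y|\ge\E Y\big)\le 2\exp\{-2(\E Y)^2/(c^2 rn)\}$, which tends to $0$ as soon as $\E Y\gg\sqrt n$ — and indeed $\E Y=\Theta(n^h)$ with $h\ge1$ already for $h\ge1$. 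The main obstacle in either route is purely bookkeeping: in the second-moment version one must carefully enumerate the overlap patterns between two potential copies and verify each contributes $o((\E X)^2)$; in the Azuma version one must set up the bounded-difference function so that a single transposition affects only $O(1)$ copies, which is why it is convenient to fix in advance the blocks of $[rn]$ in which the copy is to be found. Since $H$ is fixed and the estimates are crude, both are routine.
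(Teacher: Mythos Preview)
Your second-moment argument is correct and takes a genuinely different route from the paper. The paper does not count copies of $H$ at all: instead it partitions $[rn]$ into $rk$ consecutive blocks $B_1,\dots,B_{rk}$ of length $n/k$ (where $k=|H|$), associates $B_j$ with the $j$-th vertex of $H$, and for each edge $e_i$ of $H$ invokes Lemma~\ref{lemma:span_prob} to bound by $\exp\{-\Theta(n)\}$ the probability that \emph{no} edge of $\RM^{(r)}_n$ spans the $r$ blocks corresponding to $e_i$. A union bound over the $k$ edges then gives failure probability $k\exp\{-\Theta(n)\}$; any collection of spanning edges, one per $e_i$, is automatically a copy of $H$ since distinct $e_i$ use disjoint blocks. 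Your approach is more elementary and self-contained (no need for Lemma~\ref{lemma:span_prob}), but the paper's gives an exponentially small error, which is exactly what makes the extension to $|H_n|\le\tfrac1{2r}(n/\log n)^{1/r}$ in Proposition~\ref{genH} go through; your Chebyshev bound $O(1/n)$ would not survive a union bound over that many configurations.

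One caution about your Azuma ``alternative'': restricting to copies in prescribed block positions does \emph{not} give a bounded Lipschitz constant. In that setup the count factorises as $Y=\prod_{i=1}^k Y_i$, with $Y_i$ the number of edges spanning the blocks for $e_i$; a single swap changes some $Y_i$ by at most~$2$, but $Y$ then changes by order $\prod_{i'\neq i}Y_{i'}$, which is random and typically of order $n^{k-1}$. So the sentence claiming $c=O(1)$ is not right as written. Since your primary second-moment route already succeeds, this is harmless, but the Azuma sketch should be dropped or reworked.
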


For the proof we will need a lemma from \cite{JCTB_paper}. Recall from Section~\ref{section:random} that for given disjoint subsets $A_1,\dots,A_r \subseteq [rn]$ we say that an $r$-edge $e$ \emph{spans} these sets if $|e\cap A_i|=1$ for each $1\le i\le r$.

\begin{lemma}\label{lemma:span_prob} Given integers $r\ge2$, $t\ge2r$, and $n\ge t$,
let $W_1,\dots,W_r$ be disjoint subsets of $[rn]$ such that $|W_i|=t$, for each $1\le i \le r$. Then, the probability that no edge of $\RM^{(r)}_{n}$ spans all these sets is at most $\exp\left\{{-\frac{1}{(2r)^r} \cdot \frac{t^r}{n^{r-1}}}\right\}$. \qed
\end{lemma}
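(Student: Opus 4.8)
The statement to prove is Lemma~\ref{lemma:span_prob}: for disjoint $W_1,\dots,W_r\subseteq[rn]$ with $|W_i|=t\ge 2r$, the probability that \emph{no} edge of $\RM^{(r)}_{n}$ spans all of them is at most $\exp\{-t^r/((2r)^r n^{r-1})\}$. The plan is to expose the edges of $\RM^{(r)}_{n}$ one at a time in a suitable order and show that, at each of many disjoint ``trials'', there is a decent chance of producing a spanning edge, conditioned on everything revealed so far. The cleanest way to organize this is the permutational scheme from Section~\ref{section:random}: draw a uniform permutation $\pi$ of $[rn]$ and read off the edges $\{\pi(jr+1),\dots,\pi(jr+r)\}$, $j=0,\dots,n-1$. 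Equivalently, one may build $\RM^{(r)}_{n}$ by the sequential ``pick $r-1$ partners for the first free vertex'' process; either gives the same conditional-probability structure.

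\textbf{Key steps.} First I would reduce to working with pairwise disjoint ``slots.'' Set $m:=\lfloor t/r\rfloor$ and, for the purpose of the lower bound on success, carve $W_i$ into $m$ disjoint sub-blocks of size (at least) $\lfloor t/m\rfloor\ge \lfloor t/(t/r)\rfloor \ge$ something $\Theta(r)$; more simply, I would just reveal edges one by one and track, after $\ell$ edges have been revealed, how many vertices of each $W_i$ remain ``untouched.'' As long as fewer than, say, $t/2$ vertices of each $W_i$ have been used, the next revealed edge has conditional probability at least $c$ of being spanning, where $c$ is a product of $r$ factors each roughly (remaining size of $W_i$)$/$(remaining size of $[rn]$) $\ge (t/2)/(rn)$; hence $c\ge (t/(2rn))^{r}$ up to the falling-factorial corrections, which are absorbed by the hypothesis $t\ge 2r$ and $n\ge t$. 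Second, I would run this for $L:=\lfloor t/(2r)\rfloor$ rounds: during these rounds at most $rL\le t/2$ vertices get touched, so in particular at most $t/2$ in each $W_i$, keeping the per-round success probability at least $p:=(t/(2rn))^{r}$ throughout (being careful that ``at most $t/2$ in each $W_i$'' is guaranteed only globally — one fixes this by instead counting, for each round, whether at least $t/2$ vertices of \emph{some} $W_i$ have been used, and noting $rL\le t/2$ forces this to never happen). Third, a spanning edge fails to appear only if all $L$ rounds fail, and since the bound $p$ holds conditionally in every round regardless of history, the failure probability is at most $(1-p)^{L}\le \exp(-pL)$. Finally I would check $pL = \lfloor t/(2r)\rfloor\cdot(t/(2rn))^{r}\ge \tfrac{1}{(2r)^{r}}\cdot\tfrac{t^{r}}{n^{r-1}}$ after simplifying the constants (using $\lfloor t/(2r)\rfloor\ge t/(4r)$ and folding the extra factors into the already generous $(2r)^{r}$), which yields exactly the claimed bound, perhaps after replacing $(2r)^r$ by a slightly larger explicit constant and then observing the stated one still works because of slack.

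\textbf{Main obstacle.} The delicate point is bookkeeping the conditional success probability: after conditioning on the first $\ell$ revealed edges, the remaining matching on the untouched vertices is again a uniform ordered $r$-matching on that set, so by the analogue of~\eqref{1edge} a specific untouched $r$-set is an edge with probability $1/\binom{r(n-\ell)-1}{r-1}$, and the number of untouched $r$-sets spanning the $W_i$'s is $\prod_i(|W_i| - (\text{used in }W_i))$; turning this into a clean lower bound $p$ that is \emph{uniform} over all histories (not just in expectation) is the crux, and it works precisely because we only run $L\approx t/(2r)$ rounds so that no $W_i$ is ever more than half-depleted. A secondary nuisance is verifying the constant: one wants $\lfloor t/(2r)\rfloor \cdot \big(t/(2rn)\big)^{r} \ge t^{r}/((2r)^{r} n^{r-1})$, i.e. $\lfloor t/(2r)\rfloor \ge t/(2r)$, which is false by a floor; the honest fix is to carry a slightly worse constant through the argument and then note that the hypotheses $t\ge 2r$, $n\ge t$ leave enough room, or to state the lemma (as the authors do) with $(2r)^r$ chosen generously enough that the floors and falling-factorial corrections are harmless.
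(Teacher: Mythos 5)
Your bookkeeping of the conditional structure is fine, but the final arithmetic does not close, and the discrepancy is not a constant-factor nuisance: your scheme proves a strictly weaker bound. With $L=\lfloor t/(2r)\rfloor$ rounds and per-round success probability $p=(t/(2rn))^r$, you get $pL=\lfloor t/(2r)\rfloor\cdot t^r/((2r)^r n^{r})$, and the inequality you need, $pL\ge t^r/((2r)^r n^{r-1})$, reduces to $\lfloor t/(2r)\rfloor\ge n$ — not to $\lfloor t/(2r)\rfloor\ge t/(2r)$ as you wrote (you dropped a factor of $n$ when cancelling). Since the $W_i$ are disjoint subsets of $[rn]$ of size $t$, we have $t\le n$, so $\lfloor t/(2r)\rfloor\le n/(2r)<n$ and the required inequality is never satisfied. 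What your argument actually yields is $\exp\{-\Theta_r(t^{r+1}/n^{r})\}$, which is weaker than the claimed $\exp\{-\Theta_r(t^{r}/n^{r-1})\}$ by the factor $t/n$ in the exponent; this is harmless when $t=\Theta(n)$ (as in Proposition~\ref{fixH}) but fatal for $t=o(n)$ (as needed in Proposition~\ref{genH}). The root cause is that you ask each of only $\sim t/(2r)$ generic edges to hit all $r$ sets simultaneously, an event of probability $\sim(t/n)^r$, whereas the target exponent is comparable to the expected number of spanning edges, $t^r/\binom{rn-1}{r-1}=\Theta_r(t^r/n^{r-1})$, which your truncation to $t/(2r)$ rounds cannot reach. (For the record, the paper itself gives no proof here — the lemma is quoted from \cite{JCTB_paper} — so the comparison is only with the statement.)

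The repair keeps your sequential framework but changes what is revealed in each round so that one power of $t/n$ is saved. Repeatedly pick an as-yet-unmatched vertex $u\in W_1$ and expose the edge $e_u$ containing it; conditioned on the history, the other $r-1$ vertices of $e_u$ form a uniform $(r-1)$-subset of the unexplored vertices, so $e_u$ spans $W_1,\dots,W_r$ with conditional probability $\prod_{i=2}^{r}|W_i\cap U|\big/\binom{|U|-1}{r-1}\ge (t/2)^{r-1}(r-1)!/(rn)^{r-1}$ as long as each $W_i$ retains at least $t/2$ unexplored vertices. Each round still consumes only $r$ vertices, so your depletion argument guarantees $\sim t/(2r)$ valid rounds, and now the exponent is $\sim \frac{t}{2r}\cdot\frac{(t/2)^{r-1}(r-1)!}{(rn)^{r-1}}=\Theta\bigl(t^{r}/((2r)^{r}n^{r-1})\bigr)$, matching the lemma (the hypotheses $t\ge 2r$, $n\ge t$ absorb the floors and falling factorials exactly as you intended).
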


\begin{proof}[Proposition~\ref{fixH}] Let $H$ have size $k$. For simplicity,  assume that $t:=n/k$ is an integer. Divide $[rn]$ into $rk$ consecutive blocks $B_1,\dots,B_{rk}$, each of length $t$. So,  $B_1=[t]$,  $B_2=\{t+1,\dots,2t\}$, etc.

We represent $H$ as a matching on vertex set $\{v_1,\dots,v_{rk}\}$, ordered consistently with the indices, disjoint from $[rn]$ and with edges $e_1,\dots,e_k$. 
For every $i=1,\dots,k$, let $I_i$ be the indicator random variable equal to 1 if there is \emph{no} edge in $\RM^{(r)}_{n}$ spanning the sets $B_j$, for all $j$ such that $v_j\in e_i$, and equal to 0 otherwise (see Figure \ref{K}). Further, define $X = \sum_{i=1}^k I_i$. We will show that a.a.s.\ $X=0$, which implies that there is a copy of $H$ in $\RM^{(r)}_{n}$.

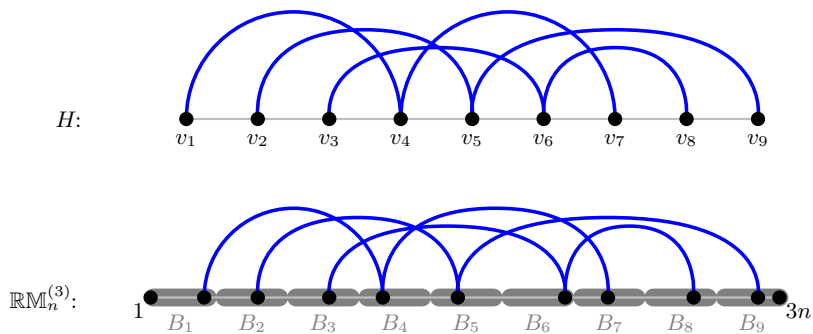
\begin{figure}[ht]
\captionsetup[subfigure]{labelformat=empty}
\begin{center}

\scalebox{0.95}
{
\centering
\begin{tikzpicture}
[line width = .5pt,
vtx/.style={circle,draw,black,very thick,fill=black, line width = 1pt, inner sep=0pt},
]

    \node[vtx] (1) at (0,0) {};
    \node[vtx] (2) at (1,0) {};
    \node[vtx] (3) at (2,0) {};
    \node[vtx] (4) at (3,0) {};
    \node[vtx] (5) at (4,0) {};
    \node[vtx] (6) at (5,0) {};
    \node[vtx] (7) at (6,0) {};
    \node[vtx] (8) at (7,0) {};
    \node[vtx] (9) at (8,0) {};
    \node[vtx] (10) at (8.8,0) {};

    \draw[line width=2.5mm, color=gray, line cap=round]  (1) -- (0.8,0) node[pos=0.5, below] {$B_1$};
    \draw[line width=2.5mm, color=gray, line cap=round]  (2) -- (1.8,0) node[pos=0.5, below] {$B_2$};
    \draw[line width=2.5mm, color=gray, line cap=round]  (3) -- (2.8,0) node[pos=0.5, below] {$B_3$};
    \draw[line width=2.5mm, color=gray, line cap=round]  (4) -- (3.8,0) node[pos=0.5, below] {$B_4$};
    \draw[line width=2.5mm, color=gray, line cap=round]  (5) -- (4.8,0) node[pos=0.5, below] {$B_5$};
    \draw[line width=2.5mm, color=gray, line cap=round]  (6) -- (5.8,0) node[pos=0.5, below] {$B_6$};
    \draw[line width=2.5mm, color=gray, line cap=round]  (7) -- (6.8,0) node[pos=0.5, below] {$B_7$};
    \draw[line width=2.5mm, color=gray, line cap=round]  (8) -- (7.8,0) node[pos=0.5, below] {$B_8$};
    \draw[line width=2.5mm, color=gray, line cap=round]  (9) -- (8.8,0) node[pos=0.5, below] {$B_9$};

    \draw[line width=0.3mm, color=lightgray]  (1) -- (10);
    \fill[fill=black, outer sep=-0.2mm]   (1) circle (0.1) node [below left] {$1$};
    \fill[fill=black, outer sep=-0.2mm]   (10) circle (0.1) node [below right] {$3n$};

    \node[vtx] (v1) at (0.75,0) {};
    \node[vtx] (v2) at (1.5,0) {};
    \node[vtx] (v3) at (2.5,0) {};
    \node[vtx] (v4) at (3.25,0) {};
    \node[vtx] (v5) at (4.3,0) {};
    \node[vtx] (v6) at (5.8,0) {};
    \node[vtx] (v7) at (6.4,0) {};
    \node[vtx] (v8) at (7.6,0) {};
    \node[vtx] (v9) at (8.5,0) {};
    \coordinate (v47) at (4.825,1.25);
    \coordinate (v36) at (4.15,1);
    \coordinate (v68) at (6.7,1);
    \coordinate (v25) at (2.9,1.12);
    \coordinate (v59) at (6.4,1.12);

    \node[color=black] at (-1.5,0) {$\RM^{(3)}_{n}\!\!:$};

    \draw[line width=0.5mm, color=blue, outer sep=2mm] (v4) arc (0:180:1.25);
    \draw[line width=0.5mm, color=blue, outer sep=2mm] plot [smooth, tension=2] coordinates {(v4) (v47) (v7)};
    \draw[line width=0.5mm, color=blue, outer sep=2mm] plot [smooth, tension=2] coordinates {(v3) (v36) (v6)};
    \draw[line width=0.5mm, color=blue, outer sep=2mm] plot [smooth, tension=2] coordinates {(v6) (v68) (v8)}; 
    \draw[line width=0.5mm, color=blue, outer sep=2mm] plot [smooth, tension=2] coordinates {(v2) (v25) (v5)}; 
    \draw[line width=0.5mm, color=blue, outer sep=2mm] plot [smooth, tension=2] coordinates {(v5) (v59) (v9)}; 

    \fill[fill=black, outer sep=1mm]   (v1) circle (0.1);
    \fill[fill=black, outer sep=1mm]   (v2) circle (0.1);
    \fill[fill=black, outer sep=1mm]   (v3) circle (0.1);
    \fill[fill=black, outer sep=1mm]   (v4) circle (0.1);
    \fill[fill=black, outer sep=1mm]   (v5) circle (0.1);
    \fill[fill=black, outer sep=1mm]   (v6) circle (0.1);
    \fill[fill=black, outer sep=1mm]   (v7) circle (0.1);
    \fill[fill=black, outer sep=1mm]   (v8) circle (0.1);
    \fill[fill=black, outer sep=1mm]   (v9) circle (0.1);

    \node[vtx] (k1) at (0.5,2.5) {};
    \node[vtx] (k2) at (1.5,2.5) {};
    \node[vtx] (k3) at (2.5,2.5) {};
    \node[vtx] (k4) at (3.5,2.5) {};
    \node[vtx] (k5) at (4.5,2.5) {};
    \node[vtx] (k6) at (5.5,2.5) {};
    \node[vtx] (k7) at (6.5,2.5) {};
    \node[vtx] (k8) at (7.5,2.5) {};
    \node[vtx] (k9) at (8.5,2.5) {};
    \coordinate (k25) at (3,3.75) {};
    \coordinate (k59) at (6.5,3.75) {};
    \coordinate (k45) at (4,3.5) {};
    \coordinate (k78) at (6.5,3.5) {};

    \draw[line width=0.3mm, color=lightgray]  (k1) -- (k9);

    \draw[line width=0.5mm, color=blue, outer sep=2mm] (k4) arc (0:180:1.5);
    \draw[line width=0.5mm, color=blue, outer sep=2mm] (k7) arc (0:180:1.5);
    \draw[line width=0.5mm, color=blue, outer sep=2mm] plot [smooth, tension=2] coordinates {(k2) (k25) (k5)};
    \draw[line width=0.5mm, color=blue, outer sep=2mm] plot [smooth, tension=2] coordinates {(k5) (k59) (k9)};
    \draw[line width=0.5mm, color=blue, outer sep=2mm] plot [smooth, tension=2] coordinates {(k3) (k45) (k6)};
    \draw[line width=0.5mm, color=blue, outer sep=2mm] plot [smooth, tension=2] coordinates {(k6) (k78) (k8)};

   \fill[fill=black, outer sep=1mm]   (k1) circle (0.1) node [below] {$v_1$};
   \fill[fill=black, outer sep=1mm]   (k2) circle (0.1) node [below] {$v_2$};
   \fill[fill=black, outer sep=1mm]   (k3) circle (0.1) node [below] {$v_3$};
   \fill[fill=black, outer sep=1mm]   (k4) circle (0.1) node [below] {$v_4$};
   \fill[fill=black, outer sep=1mm]   (k5) circle (0.1) node [below] {$v_5$};
   \fill[fill=black, outer sep=1mm]   (k6) circle (0.1) node [below] {$v_6$};
   \fill[fill=black, outer sep=1mm]   (k7) circle (0.1) node [below] {$v_7$};
   \fill[fill=black, outer sep=1mm]   (k8) circle (0.1) node [below] {$v_8$};
   \fill[fill=black, outer sep=1mm]   (k9) circle (0.1) node [below] {$v_9$};

    \node[color=black] at (-1.15,2.5) {$H\!\!:$};

\end{tikzpicture}
}

\end{center}

\caption{A matching $H$ of size 3 and its copy in $\RM^{(3)}_{n}$.}
\label{K}
			
\end{figure}

To this end, observe that for each $i$, by Lemma~\ref{lemma:span_prob} applied to the sets $B_j$, $v_j\in e_i$,	
\[
\PP(I_i=1)\le\exp\left\{-\frac{1}{(2r)^r} \cdot \frac{t^r}{n^{r-1}}\right\}=\exp\{-\Theta(n)\}.
\]
Finally, by Markov's inequality and the linearity of expectation,
\[
\PP(X\ge 1)\le
\E X = \sum_{i=1}^k \PP(I_i=1)\le k\exp\{-\Theta(n)\}=o(1),
\]
finishing the proof. \qed	
\end{proof}

Along the same lines one can get an even more general result.

\begin{proposition}\label{genH}
Let $H_n$ be a sequence of ordered $r$-matchings of size  $|H_n|\le \frac{1}{2r}\left(\frac{n}{\log n}\right)^{\frac{1}{r}}$. Then, a.a.s., there is a copy of $H_n$ in~$\RM^{(r)}_{n}$.
\end{proposition}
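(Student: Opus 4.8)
The plan is to replay the proof of Proposition~\ref{fixH} verbatim, but keeping track of how every quantity depends on $k:=|H_n|$, which is now allowed to grow with $n$. For simplicity assume $t:=n/k$ is an integer (otherwise take $t:=\lfloor n/k\rfloor$ and discard the at most $rk\le n$ leftover vertices). Partition $[rn]$ into $rk$ consecutive blocks $B_1,\dots,B_{rk}$ of length $t$, represent $H_n$ as a matching on $\{v_1,\dots,v_{rk}\}$ (ordered consistently with the indices) with edges $e_1,\dots,e_k$, and for each $i\le k$ let $I_i$ be the indicator of the event that \emph{no} edge of $\RM^{(r)}_n$ spans the $r$ blocks $\{B_j:v_j\in e_i\}$. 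Put $X=\sum_{i=1}^k I_i$. Exactly as in Proposition~\ref{fixH}, the event $X=0$ forces a copy of $H_n$ in $\RM^{(r)}_n$: picking, for each $i$, one edge of $\RM^{(r)}_n$ spanning the blocks of $e_i$ produces $k$ pairwise distinct edges (the block sets are pairwise disjoint because $H_n$ is a matching), and the relative order of these edges is dictated by the order of $v_1,\dots,v_{rk}$, so the resulting sub-matching is order-isomorphic to $H_n$. Note this embedding argument does not use that $H_n$ is fixed, only that it is a matching.

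The one genuinely new estimate is $\PP(I_i=1)$. Lemma~\ref{lemma:span_prob} applies as long as $t\ge 2r$ and $n\ge t$; the second holds since $k\ge1$, and the first holds for large $n$ because the hypothesis $k\le\frac1{2r}(n/\log n)^{1/r}$ gives $t=n/k\ge 2r\,n^{1-1/r}(\log n)^{1/r}\to\infty$. The lemma then yields
\[
\PP(I_i=1)\le\exp\!\left\{-\frac{1}{(2r)^r}\cdot\frac{t^r}{n^{r-1}}\right\}
=\exp\!\left\{-\frac{1}{(2r)^r}\cdot\frac{n}{k^r}\right\}.
\]
Here the bound on $|H_n|$ is used at exactly the right strength: from $k^r\le\frac1{(2r)^r}\cdot\frac{n}{\log n}$ we get $\frac{n}{k^r}\ge (2r)^r\log n$, hence $\PP(I_i=1)\le e^{-\log n}=1/n$. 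Finally, by linearity of expectation and Markov's inequality,
\[
\PP(X\ge1)\le\E X=\sum_{i=1}^k\PP(I_i=1)\le\frac{k}{n}\le\frac{1}{2r}\,n^{1/r-1}(\log n)^{-1/r}=o(1)
\]
for every $r\ge2$ (since $1/r-1<0$), so a.a.s.\ $X=0$ and $\RM^{(r)}_n$ contains a copy of $H_n$.

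There is no serious obstacle here; the whole content is the bookkeeping that shows the per-edge span-failure probability is small enough to survive a union bound over the $k$ edges of $H_n$. The exponent $1/r$ and the $\log n$ factor in the admissible size $\frac1{2r}(n/\log n)^{1/r}$ are precisely what make $\PP(I_i=1)$ as small as $1/n$ while $k=O((n/\log n)^{1/r})$, so that $\E X=k/n\to0$; any asymptotically larger $H_n$ would break this balance. The only minor points to check carefully are the applicability conditions of Lemma~\ref{lemma:span_prob} (handled above) and the integrality of $t$ (handled by the floor and discarding spare vertices).
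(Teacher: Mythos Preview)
Your proposal is correct and follows exactly the route the paper indicates: the paper does not spell out a proof of Proposition~\ref{genH} but only remarks that it is obtained ``along the same lines'' as Proposition~\ref{fixH}, and your argument is precisely that reprise with the quantitative bookkeeping made explicit. The use of Lemma~\ref{lemma:span_prob}, the computation $\PP(I_i=1)\le \exp\{-n/((2r)^rk^r)\}\le 1/n$, and the union bound $\E X\le k/n=o(1)$ are all exactly what the paper has in mind.
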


\subsection*{Stronger concentration inequality}

For the remaining proof we need a concentration inequality stronger than Lemma~\ref{azuma}, namely a Talagrand's concentration inequality for random permutations from \cite{Talagrand}. We quote here a slightly simplified version from \cite{LuczakMcDiarmid} (see also \cite{McDiarmid}).

\begin{lemma}[Luczak and McDiarmid \cite{LuczakMcDiarmid}]\label{tala}
	Let $h(\pi)$ be a function defined on the set of all permutations of order $N$ which, for some positive constants $c$ and $d$, satisfies
	\begin{enumerate}[label=\RMlabel]
		\item\label{thm:talagrand:i} if $\pi_2$ is obtained from $\pi_1$ by swapping two elements, then $|h(\pi_1)-h(\pi_2)|\le c$;
		\item\label{thm:talagrand:ii} for each $\pi$ and $s>0$, if $h(\pi)=s$, then in order to show that $h(\pi)\ge s$, one needs to specify only at most $ds$ values $\pi(i)$.
	\end{enumerate}
	Then, for every $\eps>0$,
	$$\PP(|h({\Pi}_N)-m|\ge \eps m)\le4 \exp(-\eps^2 m/(32dc^2)),$$
	where $m$ is the median of $h({\Pi}_N)$.
\end{lemma}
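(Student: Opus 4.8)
The plan is to read Lemma~\ref{tala} as the ``bounded differences $+$ certifiability'' corollary of a Talagrand-type \emph{convex-distance} concentration inequality, the only real novelty being that the ground probability space is not a product space but the symmetric group $S_N$ equipped with the uniform measure. Accordingly the proof would fall into two parts: (1) a purely space-theoretic inequality for $\Pi_N$ that does not mention $h$ at all; and (2) a by-now-standard deduction of the statement of Lemma~\ref{tala} from that inequality, using hypotheses~\ref{thm:talagrand:i} and~\ref{thm:talagrand:ii}.

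For part (1), for a set $A\subseteq S_N$ and a permutation $\pi$ I would use Talagrand's convex (weighted Hamming) distance
\[
d_A(\pi)=\sup_{\alpha\in\mathbb R_{\ge0}^{N},\ \|\alpha\|_2\le 1}\ \inf_{\sigma\in A}\ \sum_{i\,:\,\pi(i)\ne\sigma(i)}\alpha_i,
\]
the target being the inequality $\E\big[\exp(\tfrac14 d_A(\Pi_N)^2)\big]\le 1/\Pr(\Pi_N\in A)$, valid for every $A$ with $\Pr(\Pi_N\in A)>0$. This would be proved by induction on $N$, mimicking Talagrand's original argument but using the recursive structure of $S_N$ rather than of a product: conditioning on the value $\Pi_N(N)$, the restriction of $\Pi_N$ to $[N-1]$ is — after relabelling its codomain — a uniform permutation of $[N-1]$ whose law does \emph{not} depend on $\Pi_N(N)$, and the fibres $\{\pi:\pi(N)=v\}$ and $\{\pi:\pi(N)=v'\}$ are identified by the single transposition swapping position $N$ with position $\pi^{-1}(v')$, hence are $O(1)$-close in the $d_A$-metric. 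These two features are exactly what Talagrand's induction step needs: split $A$ according to whether a member agrees with the revealed value, apply the inductive hypothesis on each fibre, and recombine with Talagrand's elementary interpolation lemma; passing through a single transposition when switching fibres only costs a harmless additive constant in the distance and can be absorbed into the exponent.

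For part (2), fix $\varepsilon>0$, let $m$ be the median of $h(\Pi_N)$, and put $A=\{\sigma:h(\sigma)<(1-\varepsilon)m\}$. Given any $\pi$ with $h(\pi)\ge m$, hypothesis~\ref{thm:talagrand:ii} supplies a set $J=J(\pi)$ with $|J|\le dm$ such that every permutation agreeing with $\pi$ on $J$ has $h\ge m$. For $\sigma\in A$, let $D=\{i\in J:\sigma(i)\ne\pi(i)\}$; a short check shows that $\sigma$ can be transformed into a permutation agreeing with $\pi$ on all of $J$ using at most $|D|$ transpositions (each transposition corrects at least one position of $J$ and creates no new disagreement inside $J$), so hypothesis~\ref{thm:talagrand:i} gives $|D|\,c\ge m-(1-\varepsilon)m=\varepsilon m$, i.e.\ $|D|\ge \varepsilon m/c$. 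Testing $d_A(\pi)$ with the weight vector equal to $1/\sqrt{|J|}$ on $J$ and $0$ elsewhere gives $d_A(\pi)\ge |D|/\sqrt{|J|}\ge \varepsilon\sqrt{m}/(c\sqrt d)$, and feeding this into the inequality of part~(1) together with $\Pr(h(\Pi_N)\ge m)\ge \tfrac12$ yields $\Pr(h(\Pi_N)<(1-\varepsilon)m)\le 2\exp(-\varepsilon^2 m/(4c^2d))$. The upper tail is handled symmetrically, with $A'=\{h<(1+\varepsilon)m\}$ in place of $A$ (note $\Pr(\Pi_N\in A')\ge\tfrac12$) and $\pi$ ranging over $\{h\ge(1+\varepsilon)m\}$; after also treating the trivial range $\varepsilon>1$ separately, the two bounds combine into $\Pr(|h(\Pi_N)-m|\ge\varepsilon m)\le 4\exp(-\varepsilon^2 m/(32dc^2))$, the constant $32$ being merely a safe choice that swallows the slack above.

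The hard part is plainly part~(1): there is no product structure to exploit, and the tempting shortcut of encoding $\Pi_N$ as a string of independent choices (e.g.\ a Fisher--Yates shuffle, under which altering one coordinate amounts to two swaps, so hypothesis~\ref{thm:talagrand:i} survives with constant $2c$) breaks down on the certifiability side, since pinning a single value $\pi(i)$ can force one to control $\Theta(N)$ of the encoding coordinates. One is therefore compelled to re-run Talagrand's induction intrinsically on $S_N$, leaning on the facts that the conditional law of $\Pi_N$ given $\Pi_N(N)$ is again uniform on one fewer point and that neighbouring fibres differ by a single transposition — which is precisely the technical heart of \cite{Talagrand} and \cite{LuczakMcDiarmid} (see also \cite{McDiarmid}), everything else being the routine corollary extraction sketched above.
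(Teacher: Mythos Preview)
The paper does not prove Lemma~\ref{tala} at all: it is explicitly quoted as a result from Luczak and McDiarmid~\cite{LuczakMcDiarmid} (with roots in Talagrand~\cite{Talagrand}; see also~\cite{McDiarmid}), so there is no in-paper proof to compare against. Your sketch is a faithful outline of exactly the argument in those references --- Talagrand's convex-distance inequality established intrinsically on $S_N$ by induction on $N$, followed by the standard Lipschitz-plus-certifiability deduction --- and your remark that the Fisher--Yates product encoding fails on the certifiability side is the correct diagnosis of why the product-space version does not suffice.
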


\subsection*{Lower bound for harmonious pairs}\label{fhlb}
For clarity of presentation we will give a proof in a special case when $P=(AB)^r$ and $Q=(AB)^{r_0}(BA)^{r_1}$, where $r_0+r_1=r$.
This makes notation more transparent but all the ingredients of the general proof are still needed.

Let $T_0=[r_0]$ and $T_1=[r]\setminus[r_0]=\{r_0+1,\dots,r\}$.
Further, for some $k\ge2$, let $[rk]=V_1\cup\cdots\cup V_r$ be a partition of $[rk]$ into consecutive blocks of size $|V_i|=k$, for $i=1,\dots,r$.
 Specifically, let $V_i=\{w_1^i,\dots,w_k^i\}$, $i=1,\dots,r$. Observe that for all $i<i'$ and $j\neq j'$ the pair of edges $e=\{w^h_{i}: h\in T_0\}\cup\{w^h_{j}: h\in T_1\}$ and $e'=\{w^h_{i'}: h\in T_0\}\cup\{w^h_{j'}: h\in T_1\}$ form either pattern $P$ (when $j<j'$) or pattern $Q$ (when $j>j'$) (see Figure~\ref{fig:deletion}).

 We say that an edge $e$ of the form $e=\{w_i^{h}: h\in T_0\}\cup\{w_j^{h}: h\in T_1\}$ is of \emph{type} $(i,j)$. Two types, $(i,j)$ and $(i',j')$ are called \emph{totally distinct} if $i\neq i'$ and $j\neq j'$. Thus, any set of edges with mutually totally distinct types forms a $\{P,Q\}$-clique.

 \begin{figure}[ht]
\captionsetup[subfigure]{labelformat=empty}
\begin{center}

\scalebox{0.8}
{
\begin{tikzpicture}
[line width = .5pt,
vtx/.style={circle,draw,red,very thick,fill=red, line width = 1pt, inner sep=0pt},
]

\foreach[evaluate=\i as \myi using int(4*(\i-1))] \i in {1,...,5} {
    \foreach[evaluate=\j as \myj using int(\j-1)] \j in {1,2,3,4} {
        \coordinate (c\i\j) at (0.75*\myi+0.75*\myj,0) {};
    };
};

\node[color=red] at (1.5,1.75) {$e\phantom{'}$};
\draw[line width=0.5mm, color=red, outer sep=2mm] (c21) arc (0:180:1.5);
\draw[line width=0.5mm, color=red, outer sep=2mm] (c44) arc (0:180:1.5);
\draw[line width=0.5mm, color=red, outer sep=2mm] (c54) arc (0:180:1.5);
\draw[line width=0.5mm, color=red, outer sep=2mm] (c34) arc (0:180:2.625);

\node[color=blue] at (2.25,1.75) {$e'$};
\draw[line width=0.5mm, color=blue, outer sep=2mm] (c22) arc (0:180:1.5);
\draw[line width=0.5mm, color=blue, outer sep=2mm] (c41) arc (0:180:1.5);
\draw[line width=0.5mm, color=blue, outer sep=2mm] (c51) arc (0:180:1.5);
\draw[line width=0.5mm, color=blue, outer sep=2mm] (c31) arc (0:180:1.125);

\node[color=green] at (4,1.75) {$e''$};
\draw[line width=0.5mm, color=green, outer sep=2mm] (c24) arc (0:180:1.5);
\draw[line width=0.5mm, color=green, outer sep=2mm] (c42) arc (0:180:1.5);
\draw[line width=0.5mm, color=green, outer sep=2mm] (c52) arc (0:180:1.5);
\draw[line width=0.5mm, color=green, outer sep=2mm] (c32) arc (0:180:0.75);

\foreach \i in {1,...,5} {
    \draw[fill=black, inner sep=5pt] (c\i1) circle[radius=3pt] node[below] {{\footnotesize{$w_{1}^{\i}$}}};
    \draw[fill=black, inner sep=5pt] (c\i2) circle[radius=3pt] node[below] {{\footnotesize{$w_{2}^{\i}$}}};
    \draw[fill=black, inner sep=5pt] (c\i3) circle[radius=3pt] node[label={[yshift=0mm,black]below:{\footnotesize{$\ldots$}}}] {};
    \draw[fill=black, inner sep=5pt] (c\i4) circle[radius=3pt] node[below] {{\footnotesize{$w_{k}^{\i}$}}};
};

\draw [decorate,decoration={brace,amplitude=5pt,mirror,raise=4ex}] (c11) -- (c14) node[midway,yshift=-3em]{\textcolor{black}{$V_1$}};
\draw [decorate,decoration={brace,amplitude=5pt,mirror,raise=4ex}] (c21) -- (c24) node[midway,yshift=-3em]{\textcolor{black}{$V_2$}};
\draw [decorate,decoration={brace,amplitude=5pt,mirror,raise=4ex}] (c31) -- (c34) node[midway,yshift=-3em]{\textcolor{black}{$V_3$}};
\draw [decorate,decoration={brace,amplitude=5pt,mirror,raise=4ex}] (c41) -- (c44) node[midway,yshift=-3em]{\textcolor{black}{$V_4$}};
\draw [decorate,decoration={brace,amplitude=5pt,mirror,raise=4ex}] (c51) -- (c54) node[midway,yshift=-3em]{\textcolor{black}{$V_5$}};

\draw[line width=0.3mm, color=black]  (-0.5,0) -- (14.75,0);

\end{tikzpicture}
}

\end{center}

\caption{
Here we have $r=r_0+r_1$, $r_0=2$, $r_1=3$, $P=(AB)^5=ABABABABAB$ and $Q=(AB)^2(BA)^3=ABABBABABA$ with edges $e=\{w_1^1, w_1^2,w_k^3,w_k^4, w_k^5\}$ (red), $e'=\{w_2^1, w_2^2,w_1^3,w_1^4, w_1^5\}$ (blue) and $e''=\{w_k^1, w_k^2,w_2^3,w_2^4, w_2^5\}$ (green). Edges $e,e'$ and $e,e''$ form pattern $Q$ and edges $e',e''$ form pattern $P$. Moreover, edges $e$, $e'$ and $e''$ are of types $(1,k)$, $(2,1)$ and $(k,2)$, respectively.}
\label{fig:deletion}
			
\end{figure}
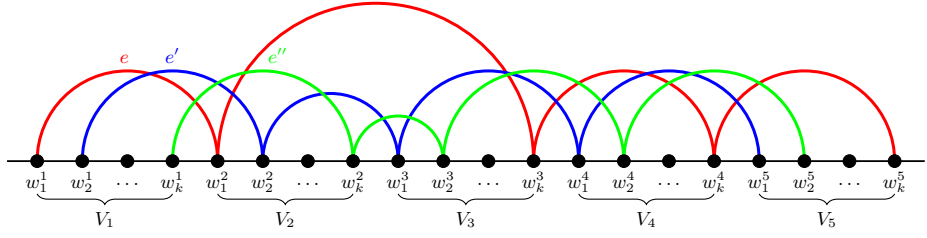	

 The idea of the proof is to blow-up the vertices $w^h_i$ to large disjoint sets, implant them within $[rn]$, and show that a.a.s.\ the random $r$-matching $\RM^{(r)}_n$ will contain  $\Theta(n^{\frac1{r-1}})$ such edges.
To this end, let us assume that $k=n^{\frac{1}{r-1}}$ and $\ell = n^{1-\frac{1}{r-1}}$ are integers, so that $k\ell=n$.
Let $W_i^h$ be the blow-up sets of vertices $w_i^h$ of size $\ell$, $i=1,\dots,k$, $h=1,\dots,r$, forming consecutive, disjoint blocks of $[rn]$.

Given an ordered pair $(i,j)\in[k]^{2}$, we say  that an edge $e\in\RM_n^{(r)}$ is a \emph{good} edge of type $(i,j)$ if $e$ spans the sets
\[
W_{i}^1,W_{i}^2,\dots,W_{i}^{r_0}, W_{j}^{r_0+1},W_{j}^{r_0+2},\dots,W_{j}^{r}.
\]
A pair of good edges $e,e'$ of types $(i,j)$ and $(i',j')$, respectively, is called \emph{separated}  if their types are totally distinct, i.e., both $i\neq i'$ and $j\neq j'$.
Note that a set of pairwise separated and good edges forms a $\{P,Q\}$-clique.

Let $X$ be the largest number of pairwise separated, good edges in $\RM_n^{(r)}$.
We will bound $\E X$ from below and afterwards apply Lemma~\ref{tala} to~$X$.

To bound  $\E X$ we will use the deletion method. Let $Y$  be the number of good edges in~$\RM_n^{(r)}$ and let $Z$ count  non-separated pairs $\{e,e'\}$ of good edges in~$\RM_n^{(r)}$.    After removing one edge from each non-separated pair, we get a set of pairwise separated, good edges of size at least $Y-Z$. Thus, $X\ge Y-Z$.

 Since there are $k^{2}$ choices for pairs $(i,j)$ and each $|W_i^h|=\ell$, we have
\[
\E Y = k^{2} \ell^r \cdot \frac{1}{\binom{rn-1}{r-1}}
\ge k^{2} \ell^r \frac{(r-1)!}{(rn)^{r-1}}
= kn^{\frac{1}{r-1}} n^{r-\frac{r}{r-1}} \frac{(r-1)!}{(rn)^{r-1}}
= \frac{(r-1)!}{r^{r-1}} k.
\]

 Let $(i,j)$ and $(i',j')$ be the types of good edges $e$ and $e'$. If $e$ and $e'$ are not separated, then either $i=i'$ or $j=j'$. Hence, for a fixed~$e$ the number of choices for $e'$ (such that $e$ and $e'$ are not separated) is at most $2k \ell^r$ and
\[
\E Z \le \frac{1}{2} k^{2} \ell^r 2 k \ell^r \cdot \frac{1}{\binom{rn-1}{r-1}\binom{rn-r-1}{r-1}}
= \E Y \cdot k \ell^r \cdot \frac{1}{\binom{rn-r-1}{r-1}}
\le \E Y \cdot (1+o(1))\frac{(r-1)!}{r^{r-1}}.
\]

 Thus, for large $n$, bounding $(1+o(1))$ by $\tfrac32$, we get
\begin{align*}
\E X \ge \E(Y-Z) = \E Y - \E Z &\ge \E Y \left(1 - \frac{3(r-1)!}{2r^{r-1}}\right)\\
 &\ge \frac{(r-1)!}{r^{r-1}} \left(1 - \frac{3(r-1)!}{2r^{r-1}}\right) k =
\Omega_r(k)
\end{align*}
for $r\ge2$.

It remains to apply Lemma~\ref{tala} to show a sharp concentration of $X$ around $\E X$. Recall (cf.\ Section~\ref{section:random}) that $\RM^{(r)}_{n}$ can be generated by using the permutation scheme. Thus, we can view $X$ as a function of an $N$-permutation $\Pi_{N}$ with $N=rn$ and $h({\Pi}_{N})=X$. Now notice that swapping two elements of $\Pi_{N}$ affects at most two edges of $\RM^{(r)}_{n}$, and so it changes the value of $h$ by at most~2. Moreover, to exhibit the event $X\ge s$, it is sufficient to reveal a set of $s$  pairwise separated, good edges. Since every such edge can be encoded by specifying $r$ values of $\Pi_{N}$, it suffices to reveal only $rs$ values of $\Pi_N$ to see a matching of size~$s$.

Let $m$ be the median of $X$. Theorem \ref{tala} (applied with $N=rn$, $c=2$, $d=r$ and $\eps=1/2$) implies that
\[
\PP(|X-m|\ge m/2)\le4\exp(-m/(512r)).
\]
Since there is a standard way to switch from the median to the expectation (see, e.g., the proof of Theorem~1.8 in~\cite{JCTB_paper}),  this completes the proof.

\subsection*{Proof of Observation~\ref{obs:1}}
 We split the proof into two cases according to the mismatch definition.

\medskip

{\bf Case 1:} Assume that $P$ and $Q$ are both collectable but do not form a harmonious pair.  Let $\lambda_P=(g_1,g_2,\dots)$ and $\lambda_Q=(h_1,h_2,\dots)$,
where $\lambda_P\neq\lambda_Q$.
If $g_1=h_1$, then take $j=1$, as, clearly, $\lambda_{P^{-1}}\neq\lambda_{Q^{-1}}$.
If (w.l.o.g.)  $g_1>h_1$, then take $j=r$, as $\lambda_{P^{-r}}\neq\lambda_{Q^{-r}}$,  unless $g_1=r$, $h_1=r-1$, and $h_2=1$, in which case  take $j=1$, as then $\lambda_{P^{-1}}\neq\lambda_{Q^{-1}}$ again.

\medskip

{\bf Case 2:} Assume that $P$ is collectable, but $Q$ is not. The case $r=3$ (and $Q=P_0=AABABB$) is the only one in which $Q^{-j}$ is collectable no matter what $j$ is (simply because all three 2-patterns are such).
Still, one can check by inspection that when choosing $j=3$ for each of the six 3-patterns beginning with $AB$, $j=1$ for $P=P_2$ and $P=P_3$, and $j=2$ for $P=P_1$, we obtain $\lambda_{P^{-j}}\neq\lambda_{Q^{-j}}$ in every case.

For $r\ge4$ we will show that, for some $j$,  $Q^{-j}$ remains non-collectable, which automatically makes the pair $P^{-j}, Q^{-j}$ a mismatch.
Being  non-collectable, $Q$ must be of the form $Q=RA^qB^tA ...$ or $Q=RB^qA^tB ...$ with $R$ collectable (possibly empty) and $q>t\ge 1$. Owing to symmetry, we assume the former option.

If $R\neq\emptyset$, take $j=1$. Then $Q^{-1}=R'A^qB^tA ...$  with $R'$ being collectable (possibly empty), thus $Q^{-1}$ is non-collectable.

For $R=\emptyset$, consider three further subcases.
 If $t\ge2$, take $Q^{-1}=A^{q-1}B^{t-1}A ...$, if $q\ge3$ and $t=1$, take $Q^{-2}=A^{q-1}BA ...$, while if $q=2$ and $t=1$, take $Q^{-r}=AABA ...$.  In each case the resulting $(r-1)$-pattern $Q^{-j}$  is non-collectable.~\qed

\end{document}